\theoremstyle{definition}
\newtheorem{thm}{Theorem}
\newtheorem{lem}[thm]{Lemma}
\newtheorem{cor}[thm]{Corollary}
\newtheorem{dfn}[thm]{Definition}
\newtheorem{rem}[thm]{Remark}
\begin{document}

\title{Vertical 3-manifolds in simplified $(2, 0)$-trisections of 4-manifolds}
\author{Nobutaka Asano}
\address{Mathematical Institute, Tohoku University, Sendai, 980-8578, Japan}
\email{nobutaka.asano.r4@dc.tohoku.ac.jp}

\begin{abstract}
We classify the $3$-manifolds obtained as the preimages of arcs on the plane for simplified $(2, 0)$-trisection maps, which we call vertical $3$-manifolds. Such a $3$-manifold is a connected sum of a $6$-tuple of vertical $3$-manifolds over specific $6$ arcs. Consequently, we show that each of the $6$-tuples determines the source $4$-manifold uniquely up to orientation reversing diffeomorphisms. We also show that, in contrast to the fact that summands of vertical $3$-manifolds of simplified $(2, 0)$-trisection maps are lens spaces, there exist infinitely many simplified $(2, 0)$-$4$-section maps that admit hyperbolic vertical $3$-manifolds.
\end{abstract}
\maketitle

\section{Introduction}
A trisection is a decomposition of a closed orientable smooth $4$-manifold into three $4$-dimensional handlebodies introduced by Gay and Kirby \cite{Kirby}. They proved that any closed orientable smooth $4$-manifold has a trisection. While the trisection has a strong meaning as a $4$-dimensional analog of the Heegaard splitting of $3$-manifolds, it is also deeply related to the study of stable maps as homotopy-deformations of stable maps are used in their proof. 
The singular value set of a stable map of a trisection is the union of immersed circles with cusps as in Figure~\ref{tri}, where the singular value set in the white boxes consists of immersed curves with only normal double points and without cusps and radial tangencies. This stable map is called a trisection map.
The fiber $\Sigma_g$ over the center point $p$ in the figure is a closed orientable surface of genus $g$, and this has the highest genus among all regular fibers of the trisection map. Set $k$ to be the number of simple closed curves without cusps in the singular value set of the trisection map. In this case, the trisection is called a $(g, k)$-trisection.
The vanishing cycles of indefinite folds of the trisection map can be represented by simple closed curves on the center fiber $\Sigma_g$. The vanishing cycles of a trisection determines the source $4$-manifold up to diffeomorphism. The surface $\Sigma_g$ with these simple closed curves is called a trisection diagram.
To find the usage of trisections, there are several studies of constructing trisections of given $4$-manifolds \cite{koenig, meier, meier-cole}. It is also used for studies of surfaces embedded in $4$-manifolds \cite{meier-zupan, cole}. Classifications of $4$-manifolds admitting trisection maps for $g=1$ is easy, and that for $g=2$ had been done by Meier and Zupan in \cite{MZ}. The classification for $g \geq 3$ is still difficult.

\begin{figure}[htbp]
\begin{center}
\includegraphics[clip, width=7cm, bb=152 424 441 713]{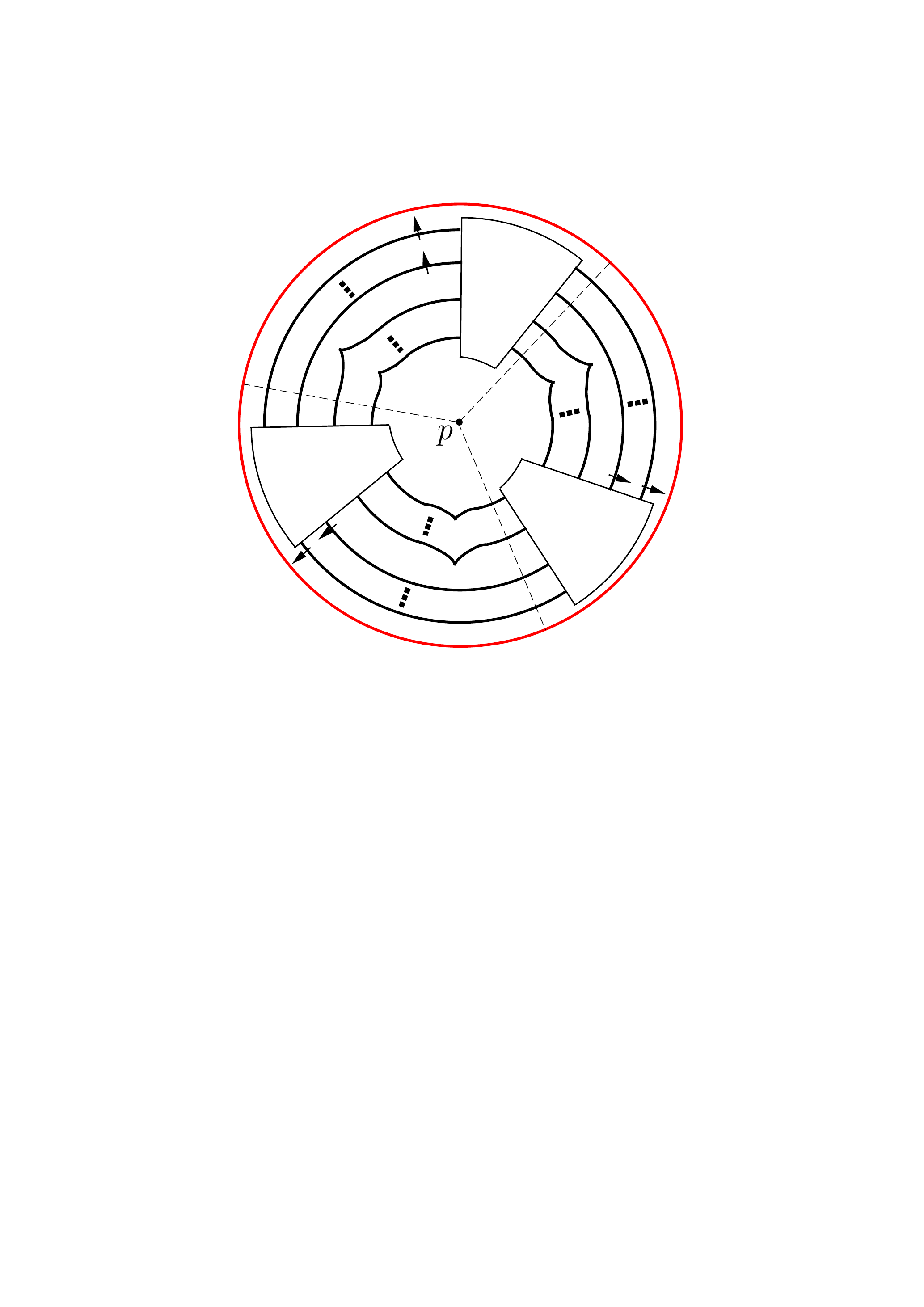}
\caption{A singular value set of a trisection map.}
\label{tri}
\end{center}
\end{figure}

On the other hand, it is a long-standing problem in the study of topology of mappings to understand the source manifold of a smooth map from the information of its singularities. In 2009, Baykur proved that any closed $4$-manifold admits a smooth map to the $2$-sphere with only Lefschetz singularities and a circle of indefinite folds \cite{Baykur}. Recently, Baykur and Saeki proved that any closed $4$-manifold admits a stable map to the plane whose singular value set is as in Figure~\ref{tri} with the singular value set in the white boxes consisting of embedded curves without cusps and radial tangencies. This gives a special case of a trisection, called a simplified $(g, k)$-trisection. They proved that any closed $4$-manifold admitting a $(2, k)$-trisection has a simplified $(2, k)$-trisection. Later, Hayano gave an alternative short proof of this result by observing monodromy diffeomorphisms generated by the Dehn twists along the vanishing cycles \cite{hayano}.


The aim of this paper is to understand relation between trisection maps and the source $4$-manifolds from the information of $3$-manifolds obtained as the preimages of arcs on $\mathbf{R}^2$. In the study of topology of mappings, Kobayashi studied a stable map whose singular value set consists of two concentric circles, the outer one is the image of definite folds and the inner one is the image of indefinite folds and cusps, where cusps are outward~\cite{kobayashi2}. In his further study \cite{kobayashi}, he constructed an infinite number of stable maps on each of $S^2 \times S^2$ and $\mathbf{CP}^2 \# \overline{\mathbf{CP}^2}$ that have the same singular value set but that are not right-left equivalent, using what he called four cusped fans. This theorem is proved by observing the $3$-manifolds obtained as the preimage of an arc in the target space. 



In this paper, we classify the $3$-manifolds obtained as the preimages of arcs on $\mathbf{R}^2$ for 
a simplified $(2, 0)$-trisection map, called vertical $3$-manifolds, and study if they determine the source $4$-manifold. The first result is the classification of the vertical $3$-manifolds. 

\begin{thm}\label{preimage}
Vertical manifolds of simplified $(2, 0)$-trisection maps are  
\begin{align*}
&{\#}^{\ell + \epsilon_1, \ell}L(k^2, k-1){\#}^{m, m+ \epsilon_2} L(k \pm 1, \pm1){\#}^n S^1 \times S^2\quad (k \in \mathbf{Z})\\
&{\#}^{\ell + \epsilon_1, \ell}L(9, 2){\#}^{m, m+ \epsilon_2} L(k, 1){\#}^n S^1 \times S^2\quad (k \in \{ 2, 5\})\\
&{\#}^{\ell + \epsilon_1, \ell}L(4, 1){\#}^{m, m+ \epsilon_2} L(k, 1){\#}^n S^1 \times S^2\quad (k \in \{ 3, 5\}).
\end{align*}
Here, $\ell, m, n \in \mathbf{Z}_{\geq 0},\epsilon_i \in \{0, 1\}$ and ${\#}^{\ell, \ell'}M$ means the connected sum of $\ell$ copies of an oriented $3$-manifold $M$ and $\ell'$ copies of the mirror image of $M$.
\end{thm}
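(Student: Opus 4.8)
The plan is to translate each vertical $3$-manifold into a handle presentation read off from the singular value set, to cut an arbitrary vertical manifold into six model pieces, and then to identify each model piece by an explicit surgery computation on the central surface $\Sigma_2$. The basic dictionary is the following. If $\alpha\subset\mathbf{R}^2$ is an embedded arc whose endpoints lie outside $f(X)$ and which meets the singular value set transversely, then $V_\alpha:=f^{-1}(\alpha)$ is a closed orientable $3$-manifold, and composing $f$ with the projection of a tubular neighbourhood $\alpha\times(-\varepsilon,\varepsilon)$ onto $\alpha$ gives a Morse-type function on $V_\alpha$ whose handles are recorded by the points where $\alpha$ is tangent to, or crosses, the fold and cusp arcs. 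Since the local models of a simplified trisection map near folds and cusps are standard, the attaching curves of these handles are the vanishing cycles of the trisection, carried on $\Sigma_2$ (together with their images under the local monodromies around cusps). In particular $V_\alpha$ inherits a Heegaard splitting of genus at most $2$ whose defining curves are among these vanishing cycles.

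The next step is the reduction to six arcs. A generic $\alpha$ meets the complement of the singular value set in a union of subarcs, each contributing a trivial piece $\Sigma_{g'}\times[0,1]$; cutting $V_\alpha$ along the corresponding separating $2$-spheres — the ones appearing in the $S^2\times[0,1]$'s created just inside the outermost, definite fold arcs — exhibits $V_\alpha$ as a connected sum. A general-position argument, together with the moves that push $\alpha$ across a cusp and over the boundary of $f(X)$, then identifies each summand with the vertical manifold of one of six standard arcs, two associated to each of the three legs of the fan. Orientations must be tracked carefully at this stage, since this is exactly where the mirror-image summands in the statement are produced.

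It then remains to compute the six model preimages for every simplified $(2,0)$-trisection diagram. Recall from Hayano's analysis of the monodromy diffeomorphisms \cite{hayano} (and the results of Baykur and Saeki recalled in the introduction) that, up to the standard moves, a simplified $(2,0)$-trisection diagram is governed by a single integer parameter together with finitely many exceptional configurations. For each diagram the six model manifolds are presented by explicit genus $\le 2$ Heegaard diagrams on $\Sigma_2$; computing their first homology and expanding the resulting fractions as continued fractions identifies them as the lens spaces $L(k^2,k-1)$ and $L(k\pm1,\pm1)$ in the generic family, and as $L(9,2)$ or $L(4,1)$ with the listed partners in the exceptional cases, while the $S^1\times S^2$ factors come from the model arcs that cross only disjoint non-separating vanishing cycles. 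Recording how many times each model summand occurs, and with which orientation, over all admissible isotopy classes of $\alpha$ yields the exponents $\ell,m,n\in\mathbf{Z}_{\geq 0}$ and the shifts $\epsilon_i\in\{0,1\}$.

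The main obstacle I expect is the reduction step: one has to verify that $V_\alpha$, up to connected sum with the six model pieces, is insensitive to all isotopies of $\alpha$ and to all handle slides of the vanishing cycles in the trisection diagram, and then to carry out the orientation bookkeeping precisely. Since $L(9,2)$ and $L(4,1)$ are chiral, it is exactly this chirality that forces the asymmetric pairs of exponents $\ell+\epsilon_1,\ell$ and $m,m+\epsilon_2$, and getting these right is the delicate part. By contrast, once the classification of diagrams and the six model diagrams are in hand, the identification of the summands as lens spaces is a routine continued-fraction computation.
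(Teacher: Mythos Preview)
Your overall architecture --- reduce an arbitrary arc to a finite set of model arcs via local moves and cutting along separating $2$-spheres near the definite fold, then compute each model preimage from the vanishing cycles --- is exactly the paper's. The substantive difference is the reference fibre. You propose to read the six model pieces as genus~$\le 2$ Heegaard diagrams on the central fibre $\Sigma_2$; the paper instead works entirely on the torus $\Sigma_1$ lying \emph{between} the two cusped circles. The reduction lemma (a \emph{cusp move} and a \emph{bigon move}, applied inductively) pushes every arc completely off the inner cusped circle, after which each model arc crosses only the outer cusped circle, twice. Hence each model piece is $M(\Sigma_1,\alpha,\beta)$ for two curves on a torus --- a genus-$1$ Heegaard splitting, i.e.\ a lens space read off directly from a pair of vectors in $\mathbf{Z}^2$, with no continued-fraction work required.

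This choice of fibre is also what makes Hayano's input usable in the precise form the paper needs: the controlling datum is not the full diagram on $\Sigma_2$ but the monodromy $\mu\in\{\mathrm{id},\,t_d^{\pm1},\,t_d^{\pm4}\}$ along the circle between the two cusped images, acting on $\Sigma_1$. The paper splits into Case~(A) ($d$ not parallel to any of $a,b,c$, which forces $\mu=t_d^{\pm1}$) and Case~(B) ($d$ parallel to one of them), writes the six cycles $a_2,b_2,c_2,a_2',b_2',c_2'$ as explicit vectors in $H_1(\Sigma_1;\mathbf{Z})\cong\mathbf{Z}^2$ via the $2\times 2$ matrix of $t_d$, and reads off the six lens spaces; the exceptional $L(9,2)$ and $L(4,1)$ tuples arise from the cusp constraint $|a_2'\cdot c_2'|=1$, which forces $pq\in\{0,\pm 2\}$. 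Your plan is not wrong in principle, but by staying on $\Sigma_2$ you would have to control the inner vanishing cycles as well and would lose direct access to this torus monodromy, which is what makes the case analysis finite and the lens-space identifications immediate.
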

To prove the theorem, we classify the $6$-tuples of vertical $3$-manifolds over the $6$ arcs in Figure~\ref{verTripleArc}. The vertical $3$-manifolds in Theorem~\ref{preimage} are obtained as their connected sums. We use Hayano's argument used in \cite{hayano} and study the positions of vanishing cycles of simplified  $(2, 0)$-trisection maps.

\begin{figure}[htbp]
\begin{center}
\includegraphics[clip, width=10cm, bb=129 533 518 704]{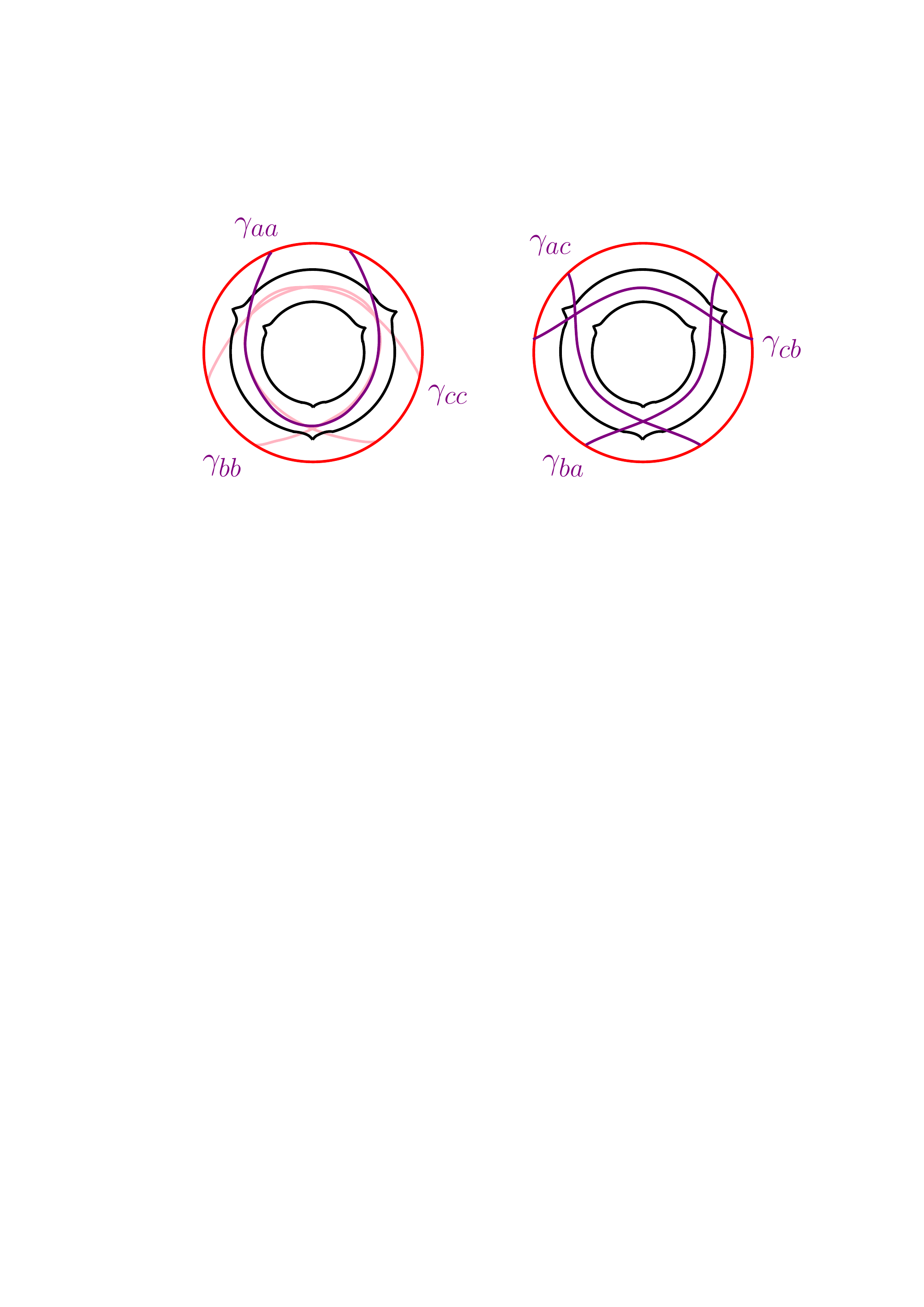}
\caption{The $6$-tuple of arcs.}
\label{verTripleArc}
\end{center}
\end{figure}

Using the information obtained in the proof of Theorem~\ref{preimage}, we determine the source $4$-manifolds from the positions of vanishing cycles. In consequence, we have the following corollary.

\begin{cor}\label{determining4MfdC}
The $6$-tuple determines the $4$-manifold unless it is  $\begin{pmatrix}
S^1 \times S^2 & S^3 & S^3\\
S^3 & L(2, 1) & S^3
\end{pmatrix}$. In this exceptional case, the $4$-manifold is determined up to orientation reversing diffeomorphisms.
\end{cor}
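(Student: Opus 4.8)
The plan is to feed the list of $6$-tuples produced in the proof of Theorem~\ref{preimage} back into the analysis of vanishing cycles used to obtain it, reconstruct the trisection diagram on the central genus-$2$ surface $\Sigma_2$, and read off the $4$-manifold. Three facts frame the argument. First, any $4$-manifold admitting a $(2,0)$-trisection has $\pi_1=1$ and $\chi=4$ (both read off directly from the trisection structure), hence $b_2=2$, and by the classification of genus-$2$ trisections \cite{MZ} it is one of $S^2\times S^2$, $\mathbf{CP}^2\#\mathbf{CP}^2$, $\mathbf{CP}^2\#\overline{\mathbf{CP}^2}$, $\overline{\mathbf{CP}^2}\#\overline{\mathbf{CP}^2}$. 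Second, of these only $\mathbf{CP}^2\#\mathbf{CP}^2$ and its mirror $\overline{\mathbf{CP}^2}\#\overline{\mathbf{CP}^2}$ are chiral; the other two carry orientation-reversing self-diffeomorphisms. Third, a genus-$2$ trisection diagram determines the source $4$-manifold up to diffeomorphism \cite{Kirby}. So it suffices to prove that a $6$-tuple together with the chirality of its entries pins down the oriented diagram on $\Sigma_2$ up to the $4$-manifold-preserving moves, and to locate the one $6$-tuple all of whose entries are amphichiral.

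I would first go through the families of $6$-tuples in Theorem~\ref{preimage} one by one. For a fixed tuple, the proof of Theorem~\ref{preimage} already records, via Hayano's normal form \cite{hayano}, the mutual positions of the three pairs of vanishing cycles along the six arcs --- their geometric intersection numbers and their slopes with respect to a fixed cut system of $\Sigma_2$. The claim to verify is that this data determines the configuration of vanishing cycles on $\Sigma_2$ up to the action of the mapping class group and the trisection moves, and hence determines the diagram; reading off $H_2$ and the intersection form from the diagram (equivalently, matching it against one of the four standard genus-$2$ diagrams) then identifies the $4$-manifold with its orientation. The mechanism that supplies the orientation is the chirality of the lens-space entries: a summand such as $L(k^2,k-1)$ with $k\neq 0,\pm1$, or $L(9,2)$, $L(4,1)$, or a chiral $L(k\pm1,\pm1)$, is replaced by its mirror when the orientation of $\Sigma_2$ (equivalently of the source $4$-manifold) is reversed, and since that mirror is not an entry of the tuple, the orientation --- and with it the sign of the intersection form --- is recovered. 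In particular this separates the chiral pair $\mathbf{CP}^2\#\mathbf{CP}^2$ and $\overline{\mathbf{CP}^2}\#\overline{\mathbf{CP}^2}$.

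For the tuple $\begin{pmatrix}S^1\times S^2 & S^3 & S^3\\ S^3 & L(2,1) & S^3\end{pmatrix}$ every entry --- $S^1\times S^2$, $S^3$, and $\mathbf{RP}^3=L(2,1)$ --- is amphichiral, so the orientation-detecting mechanism is unavailable. For this tuple I would argue directly that the vanishing-cycle analysis narrows the diagram down to the standard genus-$2$ diagram of $\mathbf{CP}^2\#\mathbf{CP}^2$ and its mirror image and to nothing else, and that, conversely, $\overline{\mathbf{CP}^2}\#\overline{\mathbf{CP}^2}$ genuinely realizes this same tuple --- its tuple being the entrywise mirror of the one for $\mathbf{CP}^2\#\mathbf{CP}^2$, which coincides with it because each entry is amphichiral. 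Since $\mathbf{CP}^2\#\mathbf{CP}^2$ and $\overline{\mathbf{CP}^2}\#\overline{\mathbf{CP}^2}$ are orientation-reversing diffeomorphic but have non-isomorphic intersection forms and so are not diffeomorphic, in this case the $4$-manifold is determined precisely up to orientation-reversing diffeomorphisms and no further.

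The step I expect to be the main obstacle is the passage from the ``local'' data recorded along the six arcs to a ``global'' conclusion about the diagram on $\Sigma_2$: a priori two inequivalent configurations of vanishing cycles could yield the same $6$-tuple, and this has to be excluded uniformly over every family appearing in Theorem~\ref{preimage}, with the additional demands that orientation be recoverable exactly when some entry is chiral and that no two non-mirror $4$-manifolds ever share a tuple. The real work is thus in organizing the case analysis so that the generic argument and the single degenerate tuple --- the one all of whose entries happen to be amphichiral --- come out together; once a diagram is in hand, identifying the $4$-manifold from it is a routine computation.
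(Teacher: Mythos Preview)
Your plan is correct and is essentially the paper's approach. The paper executes exactly the case analysis you outline, packaged as Theorem~\ref{determining4Mfd}: for each $6$-tuple produced by Theorems~\ref{thmA} and~\ref{thmB} (these, rather than Theorem~\ref{preimage}, are where the tuples and the explicit vanishing-cycle data on $\Sigma_1$ live), it homotopes the map by an unwrinkle and a sink, reads off a Kirby diagram from the curves $a_2,b_2,c'_2,a'_2$ on the torus $\Sigma_1$, and computes the $4$-manifold directly; the corollary is then a one-line inspection of the resulting table. The one methodological difference is that you propose to reconstruct the full trisection diagram on the genus-$2$ central fibre and match it against the Meier--Zupan list, whereas the paper stays on the intermediate genus-$1$ fibre $\Sigma_1$ and uses Kirby calculus --- your route would work, but must also account for the inner cusped circle, which the paper's unwrinkle-then-sink manoeuvre absorbs into a single Lefschetz point and hence into one extra framed curve. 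Your chirality observation --- that the exceptional tuple is precisely the one whose every entry ($S^1\times S^2$, $S^3$, $\mathbf{RP}^3$) is amphichiral --- is a clean conceptual explanation for why case~(5) of Theorem~\ref{determining4Mfd} is the lone ambiguity, though the paper does not make this explicit.
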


The detailed correspondence between $6$-tuples and $4$-manifolds can be found in Theorem~\ref{determining4Mfd}.

From Theorem \ref{preimage}, we can see that any summand of a vertical $3$-manifold of a simplified $(2, 0)$-trisection is a lens space. On the other hand, if we consider $4$-manifolds with four sections, named $4$-sections (cf. \cite{IN}), we can obtain infinitely many different hyperbolic $3$-manifolds as vertical $3$-manifolds.

\begin{thm}\label{hyperbolic}
Suppose $X = \#^2 S^2 \times S^2$ or $\#^2\mathbf{CP}^2 \#^2 \overline{\mathbf{CP}^2}$. Then there exists a family of simplified $(2, 0)$-$4$-section maps 
such that the vertical $3$-manifolds 
over the arc $\omega$ in Figure~\ref{2_4_gon_multi} are hyperbolic and mutually non-diffeomorphic.
\end{thm}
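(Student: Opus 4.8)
The plan is to realize the hyperbolic vertical $3$-manifolds as punctured-torus (or once-punctured-torus) bundles, since those are among the simplest closed $3$-manifolds known to be hyperbolic, and they arise naturally from monodromy considerations on a genus-$2$ fiber. First I would set up the combinatorial model: a simplified $(2,0)$-$4$-section of $X$ is governed by the positions of vanishing cycles on the central genus-$2$ surface $\Sigma_2$, and the arc $\omega$ in Figure~\ref{2_4_gon_multi} is chosen so that its preimage is a surface bundle over $S^1$ whose fiber is obtained by cutting $\Sigma_2$ along the vanishing cycles crossed transversally by $\omega$. Concretely I would arrange the vanishing cycles so that the regular fiber over a point of $\omega$, after the relevant compressions from both sides, is a once-punctured torus, and the monodromy of this bundle is the product of Dehn twists along the vanishing cycles encountered going around $\omega$ — exactly the kind of computation Hayano's method in \cite{hayano} makes explicit.

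Next I would write the monodromy as an explicit element of $\mathrm{SL}(2,\mathbf{Z})$ (the mapping class group of the once-punctured torus modulo the boundary twist), using the fact that a Dehn twist along a nonseparating curve on the punctured torus acts as a transvection. I would pick the vanishing cycles so that, as a function of an integer parameter $t$ labeling the family, the monodromy matrix is conjugate to something like $\begin{pmatrix} 2 & 1 \\ 1 & 1 \end{pmatrix}^{t}$ composed with a fixed factor, so that its trace has absolute value $>2$ for all $t$ large — i.e., the monodromy is Anosov. By Thurston's hyperbolization theorem for surface bundles (the once-punctured-torus-bundle case, which is classical and explicit — these are exactly the cusped hyperbolic once-punctured-torus bundles, and their Dehn fillings along the section determined by the $4$-section structure are hyperbolic for all but finitely many slopes by Thurston's hyperbolic Dehn surgery theorem), the closed manifold obtained is hyperbolic for all sufficiently large $t$. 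Distinct values of $t$ give Anosov maps with distinct dilatations (the dilatation is the larger eigenvalue of the monodromy matrix, which grows with $t$), hence non-conjugate monodromies, hence non-homeomorphic bundles; combined with the fact that hyperbolic volume is monotone here (or simply that $H_1$ or the monodromy's algebraic data distinguishes them), the manifolds in the family are mutually non-diffeomorphic.

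The remaining point is to check that such a family of vanishing-cycle configurations genuinely comes from simplified $(2,0)$-$4$-section maps on the two prescribed $4$-manifolds $\#^2 S^2\times S^2$ and $\#^2\mathbf{CP}^2\#^2\overline{\mathbf{CP}^2}$. For this I would exhibit an explicit $4$-section diagram on $\Sigma_2$: a $(2,0)$-trisection diagram of one of these manifolds can be modified by an isotopy or a ``chart move'' (adding a canceling pair, or sliding a vanishing cycle over the arc) that changes neither the diffeomorphism type nor the trisection genus but increases by one the number of times $\omega$ crosses a given vanishing cycle; iterating this gives the one-parameter family. That the total vanishing-cycle data still closes up to the correct $4$-manifold is checked by the standard handle-slide / Kirby-calculus bookkeeping, using that both target manifolds are simply connected with the correct intersection form — this is routine but must be done carefully to keep the $4$-section axioms (the complementary pieces remain $4$-dimensional handlebodies).

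The main obstacle I expect is the simultaneous control of two things that pull against each other: the monodromy around $\omega$ must stay Anosov and vary (to get hyperbolicity and non-diffeomorphism), while the global vanishing-cycle configuration must remain a valid simplified $(2,0)$-$4$-section diagram of a \emph{fixed} $4$-manifold (to get the statement about $\#^2S^2\times S^2$ and $\#^2\mathbf{CP}^2\#^2\overline{\mathbf{CP}^2}$). Getting a modification move that increases the intersection number with $\omega$ without changing the source $4$-manifold — essentially finding the right ``stabilization in the $\omega$-direction only'' — is the delicate design step; once such a move is in hand, the hyperbolicity follows from Thurston's theorem and the non-diffeomorphism follows from the growth of the dilatation, both of which are then essentially formal.
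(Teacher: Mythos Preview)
Your proposal rests on a structural misconception about what the preimage $f^{-1}(\omega)$ actually is. The arc $\omega$ is a properly embedded arc in the image disk with both endpoints on the definite-fold circle; its preimage is a closed $3$-manifold, but it is \emph{not} a surface bundle over $S^1$ and carries no ``monodromy'' in the sense you invoke. Projection to $\omega\cong[0,1]$ gives a Morse function on $f^{-1}(\omega)$ whose critical points sit over the intersections of $\omega$ with the indefinite-fold image; this yields a handle (Heegaard-type) decomposition of $f^{-1}(\omega)$, not a fibration. So the entire framework of once-punctured-torus bundles, $\mathrm{SL}(2,\mathbf{Z})$-monodromy, Anosov trace conditions, and dilatation growth does not apply to the object you are trying to analyze. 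Your parenthetical about ``Dehn fillings along the section determined by the $4$-section structure'' likewise has no referent here: there is no cusp to fill.

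The paper takes a completely different route and it is worth seeing why it works where yours stalls. It builds the $4$-section map as a \emph{double}: one first constructs a stable map $f\colon W\to\mathbf{R}^2$ on a compact $4$-manifold $W$ whose singular-value set is the left half of Figure~\ref{2_4_gon_multi}, with vanishing cycles $\alpha_i,\beta_i,\gamma_i$ on $\Sigma_2$ chosen so that $\gamma_1\cup\gamma_2$, viewed on the standard genus-$2$ surface in $S^3$, is a $2$-bridge link with adjustable winding parameters $l,r$. Then $W$ has a Kirby diagram given by this $2$-bridge link with surface framings, the closed $4$-manifold is $DW=W\cup\overline{W}$, and a short Kirby-calculus argument identifies $DW$ as $\#^2S^2\times S^2$ or $\#^2\mathbf{CP}^2\#^2\overline{\mathbf{CP}^2}$ depending on the parity of $l,r$. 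The vertical $3$-manifold over $\omega$ is then $\partial W$, i.e.\ the result of Dehn surgery on a hyperbolic $2$-bridge link (Menasco \cite{menasco}); for $|l|,|r|$ large the surgery coefficients are large and Thurston's hyperbolic Dehn surgery theorem gives hyperbolicity and pairwise non-diffeomorphism. The doubling construction is what makes the ``keep the $4$-manifold fixed while the $3$-manifold varies'' step painless --- the issue you correctly flagged as the delicate point in your plan is dissolved rather than confronted.
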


\begin{figure}[htbp]
\begin{center}
\includegraphics[width=6cm, bb=209 538 384 713]{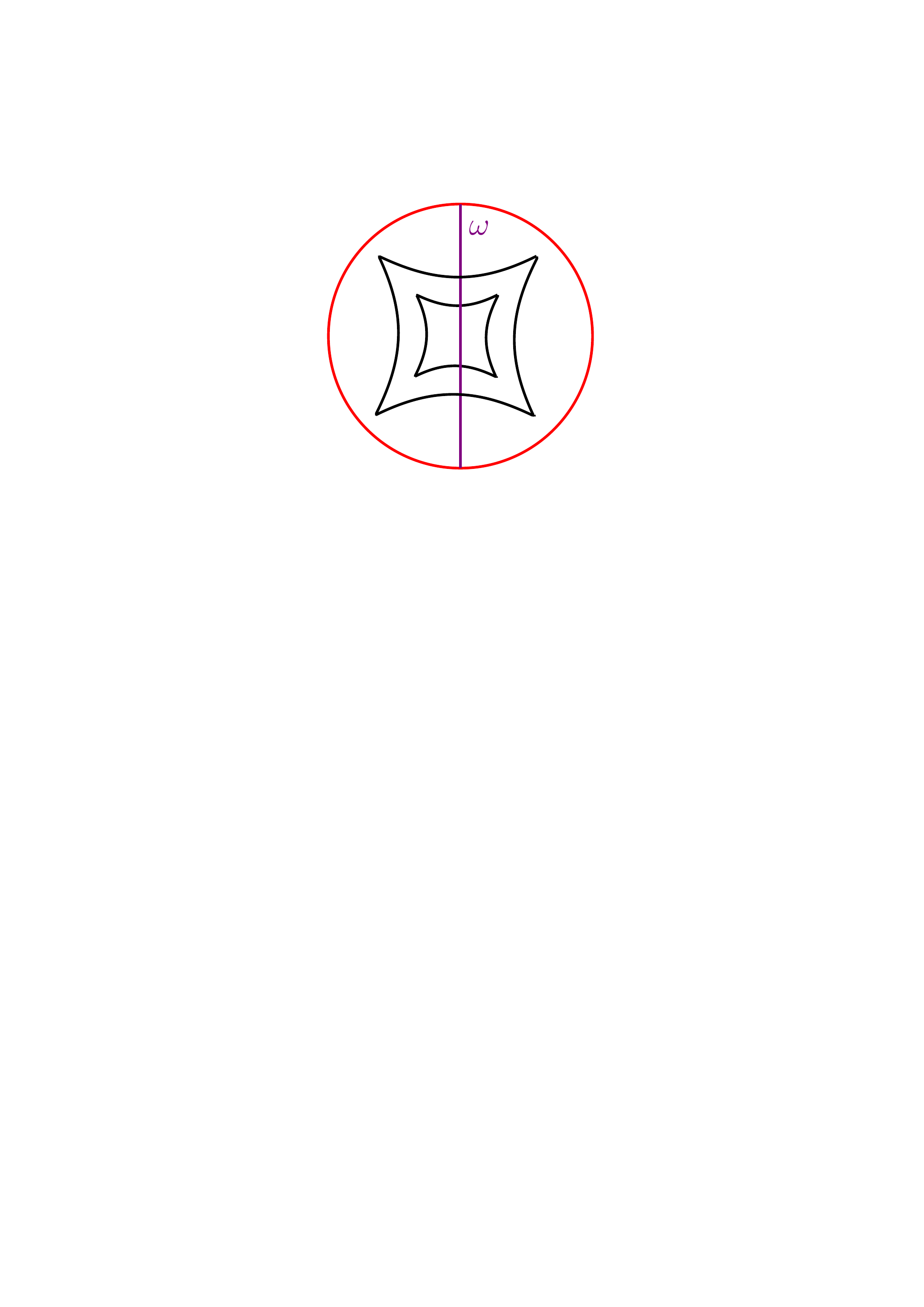}
\caption{The singular value set of a $(2, 0)$-$4$-section map.}
\label{2_4_gon_multi}
\end{center}
\end{figure}

To prove the theorem, we use a handle decomposition of the $4$-manifold induced by the $4$-section map. The infinite sequence of hyperbolic $3$-manifolds is given as a sequence of surgered manifolds along hyperbolic $2$-bridge links.

The paper is organized as follows: In Section 2, we give the definition of simplified $(g, k)$-trisection maps and introduce some properties between mapping class groups and vanishing cycles of simplified $(2, 0)$-trisection maps used by Hayano in \cite{hayano}.
In Section 3, we introduce the $6$-tuples of vertical $3$-manifolds and give their classification. Theorem~\ref{preimage} is proved in Section 4. In Section 5, we determine the $4$-manifold for each $6$-tuple. Corollary ~\ref{determining4MfdC} is obtained as a consequence of this result. In Section 6, we give the definition of  $4$-section maps and prove Theorem~\ref{hyperbolic}.

The author would like to thank Masaharu Ishikawa for many discussions and encouragement. He would also like to thank Hironobu Naoe for useful suggestion and  Kenta Hayano for pointing out an error at the preliminary stage of this study. This work was supported in part by the WISE Program for AI Electronics, Tohoku University.

\section{Preliminary}
Let $X$ be a closed orientable smooth $4$--manifold and $f : X \to \mathbf{R}^2$ be a stable map.
Singularities of $f$ are classified into three types: definite folds, indefinite folds and cusps.
The image of indefinite folds is an immersed curve on $\mathbf{R}^2.$ Let $x$ be an indefinite fold and choose a short arc on $\mathbf{R}^2$ that intersects the image of indefinite folds transversely at $f(x)$. The fiber changes along this short arc as shown on the left in Figure~\ref{sing_TFS}. The simple closed curve $c$ shrinking to the point $x$ is called a vanishing cycle of $f$ at $x$.
The image of definite folds is also an immersed curve on $\mathbf{R}^2.$ The fiber changes along a transverse short arc as shown on the middle in Figure~\ref{sing_TFS}. 
A cusp appears at the endpoints of folds. The image of the singular set near the cusp is a cusped curve as shown on the right in the figure. In the case of trisection maps, the folds adjacent to a cusp are always indefinite. Choose a point $p$ near the image of the cusp and draw a transverse short arc from $p$ to each of the two arcs as in the figure. The vanishing cycles $a$ and $b$ for these transverse short arcs intersect transversely at one point on the fiber over $p$.

\begin{figure}[htbp]
\begin{center}
\includegraphics[clip, width=15cm, bb=130 591 561 713]{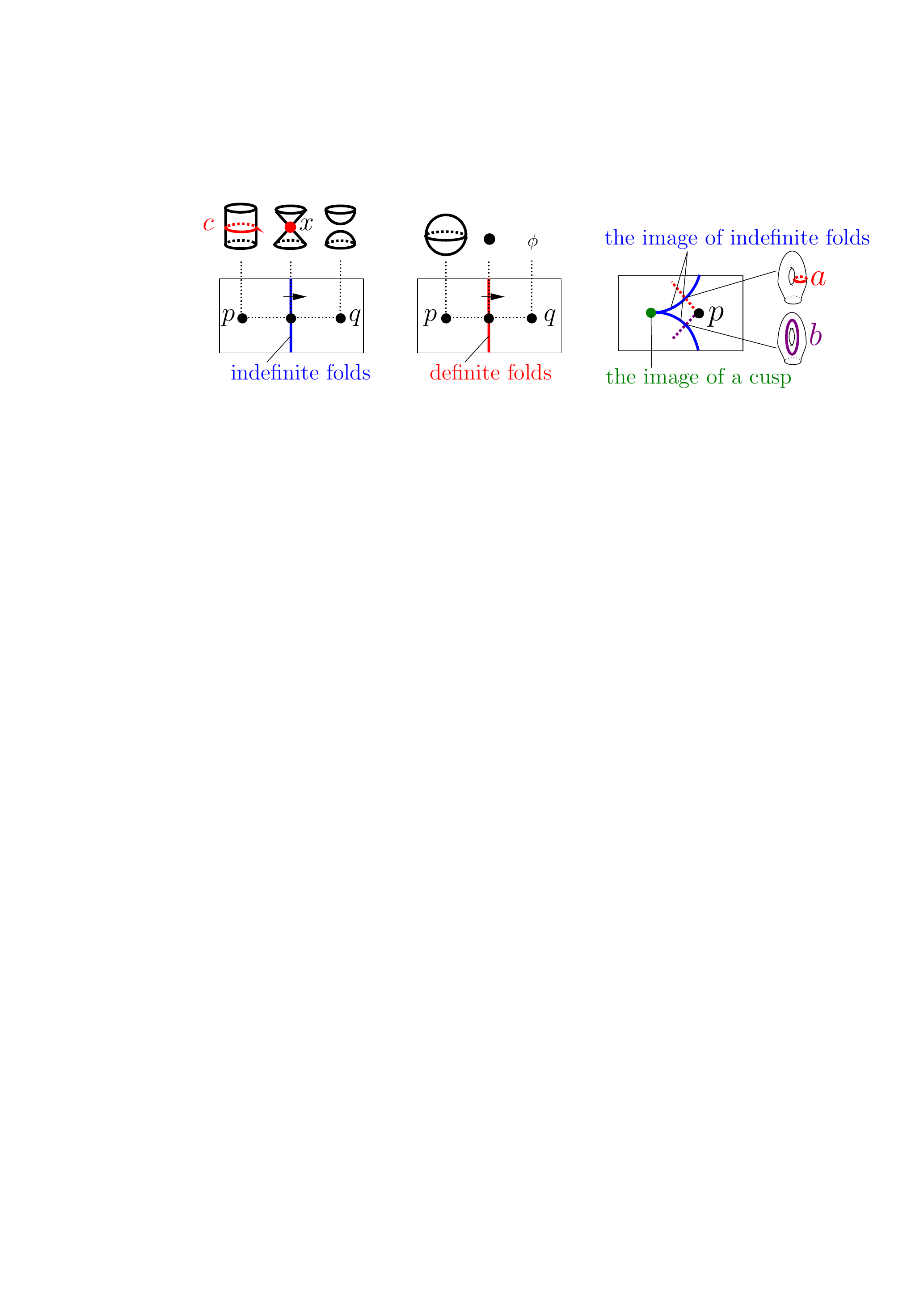}
\end{center}
\caption{Deformation of fibers near singularities.}
\label{sing_TFS}
\end{figure}

\begin{dfn} 
A stable map $f : X \to \mathbf{R}^2$ is called a simplified $(g, k)$-trisection map if the following conditions hold:
\begin{itemize}
\item The singular value set of definite folds is a circle, bounding a disk $D$.
\item The singular value set of indefinite folds consists of $g$ concentric circles on $D$. Each of the inner $g-k$ circles has three outward cusps.
\item The preimage of the point at the center is a closed orientable surface of genus $g$.
\end{itemize}
See Figure~\ref{sim_tri_2}.
\end{dfn}

\begin{figure}[htbp]
\begin{center}
\includegraphics[clip, width=5.0cm, bb=274 499 437 662]{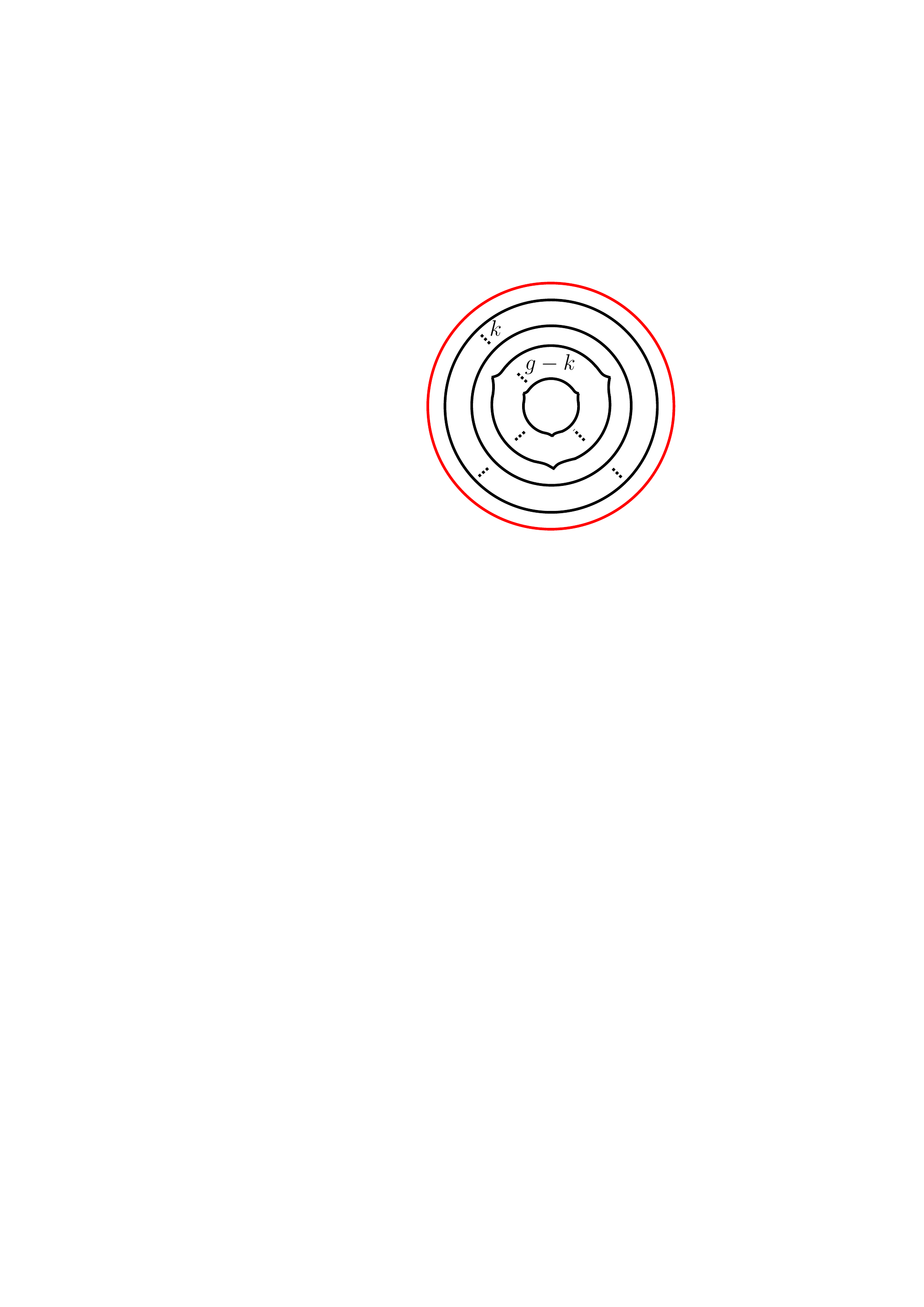}
\end{center}
\caption{A simplified $(g, k)$-trisection map.}
\label{sim_tri_2}
\end{figure}

In this paper, we study simplified $(2, 0)$-trisection maps. 
Let $e_a, e_b, e_c$ be the edges of the outer, cusped circle as shown on the left in Figure~\ref{simGenus2TriMonodomyAndPath}. The dotted curves in the figure are reference paths. Let $p_1$ be the branch point of the reference paths shown in the figure and let $\Sigma_1$ denote the fiber over $p_1$.
\begin{figure}
\begin{center}
\includegraphics[clip, width=10cm, bb=128 568 469 713]{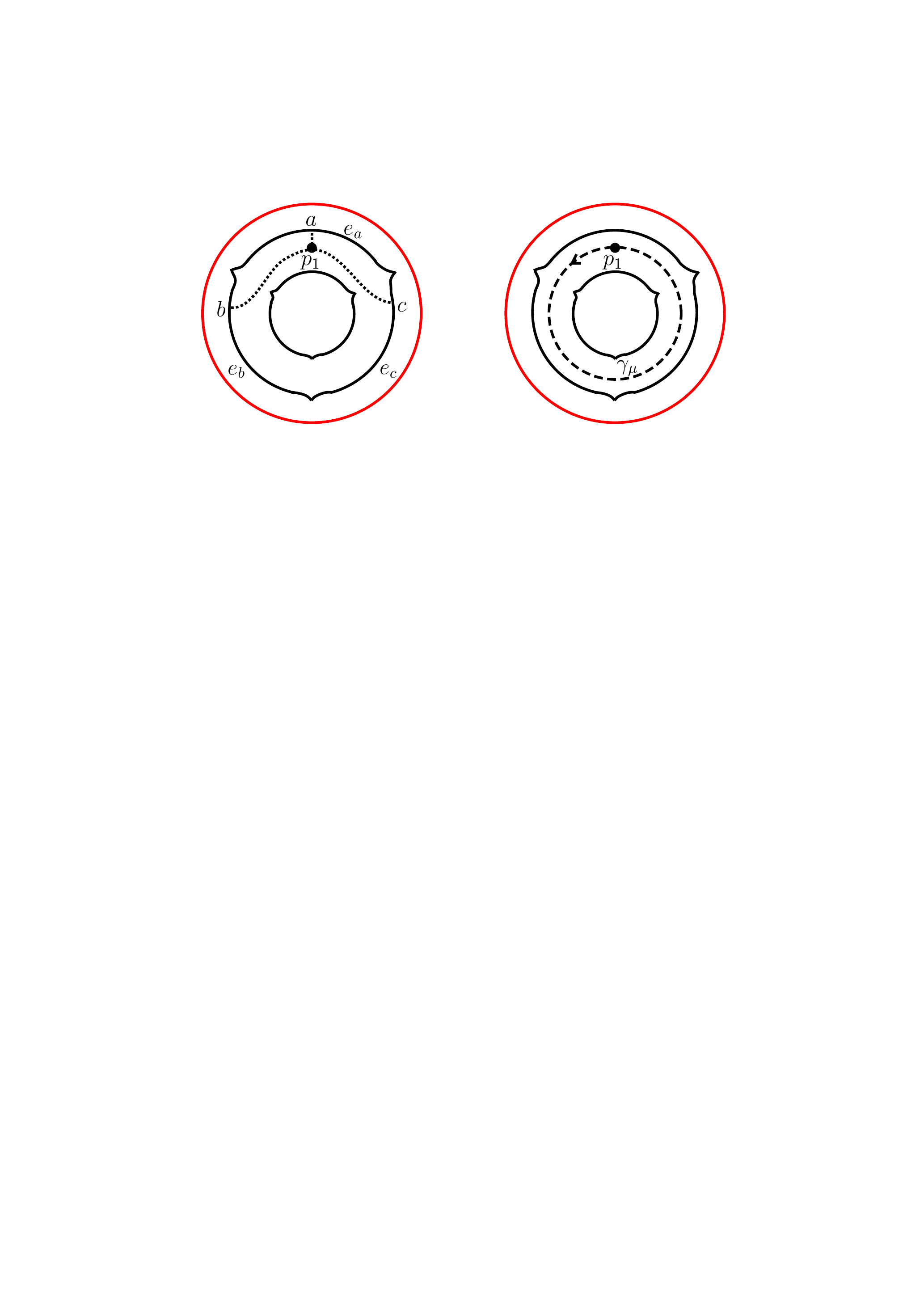}
\end{center}
\caption{Reference paths and the monodromy $\mu$.}
\label{simGenus2TriMonodomyAndPath}
\end{figure}
Let $\gamma_\mu$ be a counter-clockwisely oriented circle on $\mathbf{R}^2$ lying between two cusped circles and passing through $p_1$, see the right figure in Figure~\ref{simGenus2TriMonodomyAndPath}. We denote the monodromy diffeomorphism from $\Sigma_1$ to itself along $\gamma_\mu$ by $\mu$. 
By \cite[Lemma 3.6]{hayano}, the monodromy $\mu$ is divided into the following three cases:
\begin{itemize}
\item[(1)] $\mu = \mathrm{id}_{\Sigma_1}$
\item[(2)] $\mu = t^{\pm 1}_d$
\item[(3)] $\mu = t^{\pm 4}_d$,
\end{itemize}
where $d$ is a simple closed curve on $\Sigma_1$ and $t_d$ is the right-handed Dehn twist along $d$. 

If $\mu$ is not the identity map, then we can divide the discussion into the following two cases depending on the mutual positions of vanishing cycles:

\begin{itemize}

\item[(A)] $d$ is not parallel to any of $a, b$ and $c$. In this case, it is known in \cite{hayano} that the map can be deformed so that the image of indefinite folds consists of a simple closed curve with four cusps as shown on the left in Figure~\ref{deformation}.

\item[(B)] $d$ is parallel to one of $a, b$ and $c$. In this case, it is also known in \cite{hayano} that the map can be deformed so that the image of indefinite folds consists of two circles with 3 cusps as shown on the right in Figure \ref{deformation}.

\end{itemize}

\begin{figure}[htbp]
\begin{center}
\includegraphics[clip, width=10cm, bb=128 568 466 713]{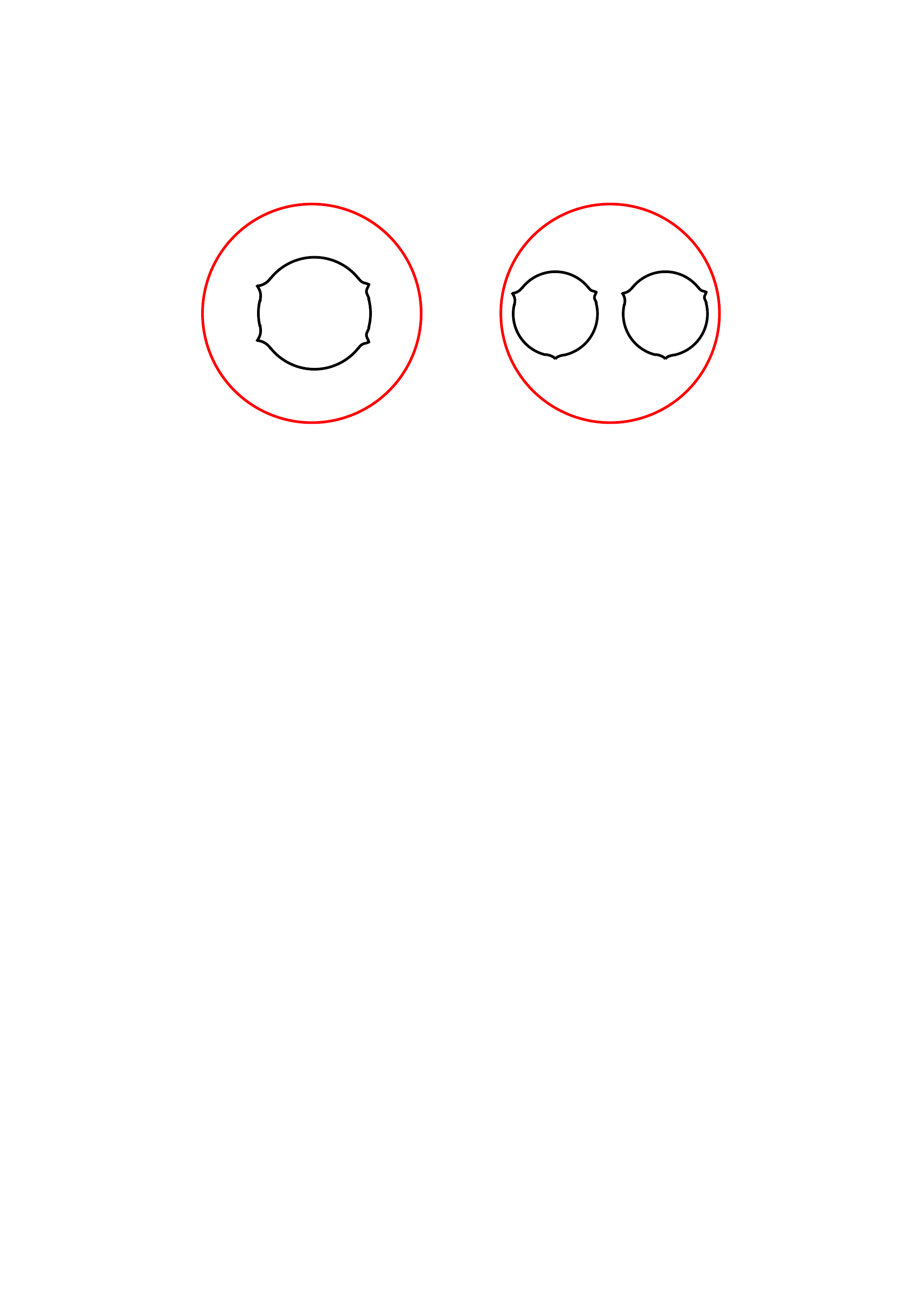}
\end{center}
\caption{Deformation of simplified trisection maps.}
\label{deformation}
\end{figure}
\begin{lem}[See the proof of Theorem 3.9 in \cite{hayano}]\label{deformationLem}
Suppose that $\mu$ is not the identity map. Then the following hold:
\begin{itemize}
\item If $d$ is in case (A), then one of $a, b, c$ intersects $d$ once transversely. 
\item If $\mu = t^{\pm 4}_d$, then $d$ is in case (B). 
\end{itemize}
\end{lem}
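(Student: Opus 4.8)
The plan is to pass to the torus $\Sigma_1$ and then read both assertions off the cusp–and–vanishing-cycle bookkeeping in Hayano's proof of \cite[Theorem~3.9]{hayano}, of which this lemma is a by-product. Since the region of $\mathbf R^2$ lying between the two cusped circles has genus-one fibers, $\Sigma_1\cong T^2$, so I work in $\mathrm{MCG}(\Sigma_1)\cong SL_2(\mathbf Z)$ via the faithful action on $H_1(\Sigma_1;\mathbf Z)$, under which a Dehn twist $t_\gamma$ is the transvection $v\mapsto v+(v\cdot[\gamma])[\gamma]$. Because consecutive edges among $e_a,e_b,e_c$ meet at a cusp of the outer cusped circle, the rule on vanishing cycles at a cusp recalled above shows that $a,b,c$ are essential simple closed curves on $\Sigma_1$ meeting pairwise transversely in one point; after a change of basis, $[a]=(1,0)$, $[b]=(0,1)$, $[c]=(1,\varepsilon)$ with $\varepsilon=\pm1$ (up to relabeling and reorienting), $[d]$ is primitive, and $\mu=t_d^{\pm1}$ or $t_d^{\pm4}$. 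A point worth stressing is that not every $4$-tuple $(a,b,c,d)$ of curves on $T^2$ occurs in this situation, so the proof genuinely uses the global structure of the trisection (via Hayano's deformations), not merely combinatorics on $T^2$.

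For the first bullet I would use Hayano's case-(A) deformation, by which $f$ can be homotoped through stable maps so that the indefinite-fold image becomes a single circle with four cusps, and then follow the vanishing cycles through the elementary moves of that homotopy. The fourth cusp is produced by absorbing the twist region of $\mu=t_d^{\pm1}$ across the edge of one of $e_a,e_b,e_c$; at this new cusp, $d$ and that one of $a,b,c$ appear as the vanishing cycles of the two adjacent fold arcs. Since adjacent vanishing cycles at a cusp intersect transversely in a single point, that one of $a,b,c$ meets $d$ once transversely. Invariantly, the local move creating a cusp from two indefinite-fold arcs with vanishing cycles $u,v$ requires $\#(u\cap v)=1$, which on the torus is precisely the condition under which $t_u$ and $t_v$ satisfy the braid relation.

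For the second bullet I would show that a monodromy $\mu=t_d^{\pm4}$ cannot be brought to the four-cusped form unless $d$ is parallel to a vanishing cycle; this is exactly the content of the proof of Hayano's Lemma~3.6, where the list $\mathrm{id},t_d^{\pm1},t_d^{\pm4}$ for $\mu$ is derived and the last possibility is shown to force $d$ to coincide with one of $a,b,c$. The mechanism is that absorbing a single Dehn twist $t_d^{\pm1}$ into a cusp is available, but iterating the move to dispose of $t_d^{\pm4}$ overshoots the four-cusped normal form, whereas when $d$ is parallel to a vanishing cycle one instead uses the case-(B) deformation (two three-cusped circles) to kill the twist. Hence $\mu=t_d^{\pm4}$ implies that $d$ is in case (B).

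The hard part is the bookkeeping of cusps and vanishing cycles through the homotopies of \cite{hayano} — that a cusp-creation forces intersection number one between the two newly adjacent vanishing cycles, and which powers of $t_d$ can be removed by such moves — which is the technical core of \cite[Theorem~3.9]{hayano}. Granting this, both assertions reduce to elementary computations in $SL_2(\mathbf Z)$.
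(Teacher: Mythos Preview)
The paper offers no argument of its own here: the lemma is simply quoted from the proof of \cite[Theorem~3.9]{hayano}, and your proposal takes the same route.

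Your gloss on that argument is broadly right, but the logical direction in the first bullet is inverted. You propose to grant the case-(A) four-cusped deformation and then read $\#(d\cap v)=1$ off the cusp adjacency at the new cusp; however, the sink move that manufactures the fourth cusp \emph{requires} $\#(d\cap v)=1$ as a hypothesis, so in Hayano's argument the intersection condition is established first --- from the way $d$ arises as one of the boundary curves $\delta_1,\delta_2,\delta_3$ of a regular neighborhood of the inner triangle on $\Sigma_2$ and how these sit relative to the outer vanishing cycles $a,b,c$ --- and the deformation then follows. For the second bullet, the ``overshoot the four-cusped normal form'' heuristic is not by itself an argument that $d$ is parallel to one of $a,b,c$; the actual mechanism (which you also allude to) is that in the formula $\mu = t_{\delta_1}^{\pm 2}t_{\delta_3}^{\pm 2}t_{\delta_2}^{\mp 1}$ on $\Sigma_2$, the exponent $\pm4$ on the torus $\Sigma_1$ occurs exactly in the configurations where the surviving essential $\delta_i$ is parallel to an outer vanishing cycle. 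These are expository wrinkles, not mathematical gaps.
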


\section{Classification of vertical 6-tuples}

Let $\gamma_{aa}$ be a properly embedded arc on $f(X) \simeq D^2$ that intersects the image of indefinite folds only at two points on the edge $e_a$ and separates $e_b \cup e_c$ and the inner cusped circle, where $e_a, e_b$ and $e_c$ are arcs in Figure~\ref{simGenus2TriMonodomyAndPath}. The arcs $\gamma_{bb}, \gamma_{cc}$ are defined similarly. See the left figure in Figure~\ref{verTripleArc}.  Let $\gamma_{ba}$ be a properly embedded arc into $f(X) \simeq D^2$ that intersects the image of indefinite folds twice,  at a point on $e_a$ and a point on $e_b$,  and separates $e_c$ and the inner cusped circle. The arcs $\gamma_{cb}, \gamma_{ac}$ are defined similarly. See the right figure in Figure~\ref{verTripleArc}. We set counter-clockwise orientations to these arcs. Set $V_{ij} = f^{-1}(\gamma_{ij})$ for $(i, j) \in \{ (a, a), (b, b), (c, c), (b, a), (c, b), (a, c) \}$ and set orientations to $V_{ij}$ so that it coincides with the product of the orientations of the fiber and the arc $\gamma_{ij}$.
The orientation reversing diffeomorphism of $D^2$ sends 
the $6$-tuple $\begin{pmatrix}
V_{aa} & V_{bb} & V_{cc}\\
V_{ab} & V_{bc} & V_{ca}
\end{pmatrix}$ to  $\begin{pmatrix}
\bar{V}_{aa} & \bar{V}_{cc} & \bar{V}_{bb}\\
\bar{V}_{ac} & \bar{V}_{cb} & \bar{V}_{ba}
\end{pmatrix}$, where $\bar{V}_{ij}$ is the mirror image of $V_{ij}$. This operation corresponds to the exchange of the labels $b$ and $c$. We call it a reflection. In this section, we give a classification of the 6-tuple up to reflection.

In Section 3.1, we study the case where $\mu$ is not the identity. The case where $\mu$ is the identity is an obvious case, which will be explained in Section 3.2.
\subsection{Case : $\mu \neq \mathrm{id}_{\Sigma_1}$.}

In this subsection, we assume that $\mu$ is not the identity. 
In Case (A), one of $a, b$ and $c$ intersects $d$ once transversely by Lemma~\ref{deformationLem}.

\begin{thm}\label{thmA}
Suppose $f$ is in Case (A) and the labels $a$, $b$ and $c$ are chosen so that $a$ and $d$ intersect once transversely.
Then the $6$-tuple $\begin{pmatrix}
V_{aa} & V_{bb} & V_{cc}\\
V_{ba} & V_{cb} & V_{ac}
\end{pmatrix}$ is one of the following up to reflection:
\[
\begin{pmatrix}
S^3 & S^3 & L((q - 1)^2, q - 1 + \epsilon)\\
S^1 \times S^2 & L(q - 2, \epsilon) & L(q, -\epsilon)
\end{pmatrix}
, \;
\begin{pmatrix}
S^3 & L(9, 2\epsilon) & L(4, \epsilon)\\
L(2, 1) & L(5, \epsilon) & S^3
\end{pmatrix},
\] where $q \neq 1$ and $\epsilon \in \{-1, 1\}$.
\end{thm}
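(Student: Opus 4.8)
The plan is to work on the genus-1 surface $\Sigma_1$ and track explicitly how the monodromy $\mu$ acts on the three vanishing cycles $a,b,c$, then compute each $V_{ij}$ as the closed $3$-manifold obtained from $\Sigma_1\times[0,1]$ by attaching $2$-handles along the appropriate curves (and capping off $S^2$ boundary components with $3$-handles). First I would fix the setup from Section 2: since $f$ is in Case (A) we may, by Lemma~\ref{deformationLem}, assume $\mu=t_d^{\pm1}$ (the exponent $\pm4$ is excluded in Case (A)), with $a$ the cycle meeting $d$ transversely once. Using the standard model of $\Sigma_1\times[0,1]$ as a Heegaard-type piece, each of $V_{aa},V_{bb},V_{cc}$ is (up to orientation) of the form $M(\Sigma_1,x,\mu(x))$ where $x\in\{a,b,c\}$, and each of $V_{ba},V_{cb},V_{ac}$ is $M(\Sigma_1,y,\mu(x))$ for the corresponding ordered pair. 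So the whole computation reduces to: (i) determine the isotopy classes of $a,b,c,d$ on $\Sigma_1=T^2$; (ii) apply $t_d^{\pm1}$; (iii) read off the resulting lens spaces from the intersection/homology data of pairs of curves on $T^2$.

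The key geometric input is the configuration of $a,b,c$ near the three outward cusps of the cusped circle: by the discussion after Figure~\ref{sing_TFS} and Hayano's analysis in \cite{hayano}, consecutive vanishing cycles at a cusp meet transversely once, so $a\cdot b=b\cdot c=c\cdot a=\pm1$ on $T^2$ after orienting, and a regular neighborhood of $a\cup b\cup c$ fills $T^2$. Up to the mapping class group of $T^2$ we may normalize $a=(1,0)$, $b=(0,1)$, and then $c$ is forced (up to the remaining symmetry) to be $(1,1)$ or $(1,-1)$; and $d$, meeting $a$ once, is a curve of the form $(0,1)$-type relative to $a$, i.e. in the same $SL(2,\mathbf Z)$-orbit as $b$ but constrained by its intersections with $b$ and $c$. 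Enumerating the finitely many possibilities for the pair $(c,d)$ subject to $|a\cdot d|=1$ and the constraint that $d$ arises from an actual simplified $(2,0)$-trisection (this is where the ``$q$'' parameter and the sporadic $L(9,\cdot),L(4,\cdot),L(5,\cdot)$ family enter) is the combinatorial heart of the argument. For each case I would compute $t_d^{\pm1}$ applied to $a,b,c$ in coordinates, obtaining curves like $(p,q)$, and then use the standard fact that attaching $2$-handles to $T^2\times[0,1]$ along $(1,0)$ on one side and $(p,q)$ on the other yields $L(p',q')$ determined by a $2\times2$ integer matrix, together with the fact that two disjoint parallel curves give $S^1\times S^2$ and two curves meeting once give $S^3$.

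The main obstacle I anticipate is step (i)–(ii): controlling which isotopy classes of $d$ (equivalently, which Dehn-twist data) are actually realizable, and bookkeeping the orientations so that the mirror-image decorations and the signs $\epsilon\in\{-1,1\}$ come out correctly. The realizability constraint is not purely a $T^2$-combinatorics question — it feeds back into the global structure of the trisection map via Hayano's deformation (the unwrinkling/sink moves), and pinning down exactly the list $\{L((q-1)^2,q-1+\epsilon),L(q-2,\epsilon),L(q,-\epsilon)\}$ versus the sporadic case requires matching the algebra on $T^2$ with those moves. Once the realizable $d$'s are classified, the lens-space identifications are routine linear algebra over $\mathbf Z$, and the reflection symmetry (orientation reversal of $D^2$, swapping $b\leftrightarrow c$ and mirroring each $V_{ij}$, as set up before this subsection) is used to cut the case list down to the two displayed normal forms. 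I would organize the proof as: (1) reduce to $\mu=t_d^{\pm1}$ and set coordinates; (2) classify configurations $(a,b,c,d)$ on $T^2$; (3) compute the six $3$-manifolds in each configuration; (4) discard non-realizable configurations and collapse by reflection.
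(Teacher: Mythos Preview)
Your overall architecture---work on $\Sigma_1\cong T^2$, normalize homology classes, compute each $V_{ij}$ as $M(\Sigma_1,\cdot,\cdot)$ via $2\times 2$ integer linear algebra---is exactly what the paper does. But there is a genuine error in how you set up the cusp constraints. You assert that the three cusps force $a\cdot b=b\cdot c=c\cdot a=\pm1$, hence $c=(1,\pm1)$ after normalizing $a=(1,0)$, $b=(0,1)$. This is wrong: the three reference paths from the basepoint $p_1$ to the edges $e_a,e_b,e_c$ cannot all be drawn so that every cusp relation involves only ``unprimed'' cycles; going once around the outer cusped circle encloses the inner circle and picks up the monodromy $\mu$. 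Concretely, in the paper's reference-path convention the three cusp relations are
\[
|a_2\cdot b_2|=1,\qquad |b_2\cdot c'_2|=1,\qquad |a'_2\cdot c'_2|=1,
\]
with $a'_2=\mu^{-1}(a_2)$ and $c_2=\mu(c'_2)$. The second relation fixes only the first coordinate of $[c'_2]$, so one sets $[c'_2]=(-1,q)$ with $q\in\mathbf Z$ a \emph{free} integer; this is where the parameter $q$ in the first $6$-tuple comes from, not from $d$. Writing $[d]=(r,1)$ and $[a'_2]=(p,\pm1)$ with $p=1\pm r$, the third relation becomes the single Diophantine constraint $pq\in\{0,-2,2\}$. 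The case $p=0$ (after reflection, equivalently $q=0$) yields the infinite family in the first displayed $6$-tuple; the cases $pq=\pm2$ force $p\in\{\pm1,\pm2\}$, of which $p=1$ and $p=2$ are discarded because they make $d$ parallel to $b_2$ or to $c_2$ (contradicting Case~(A)), while $p=-1$ and $p=-2$ both give the sporadic second $6$-tuple. With your normalization $c=(1,\pm1)$ the parameter $q$ is frozen, and you cannot recover the family $L((q-1)^2,q-1+\epsilon)$ for general $q$.

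A secondary point: the obstacle you anticipate---a global ``realizability'' constraint on $d$ coming from unwrinkle/sink moves---does not appear in this proof. Once the correct cusp relations and the Case~(A) hypothesis ($d$ not parallel to $a,b,c$) are imposed, every solution of the linear algebra is realized; the unwrinkle/sink homotopies are used only later, in Theorem~\ref{determining4Mfd}, to produce a Kirby diagram and identify the source $4$-manifold. So step~(4) of your plan is vacuous here, and the entire argument is the $T^2$ computation you outline in step~(3), once the constraints in step~(2) are corrected.
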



In Case (B),  the curve $d$ is parallel to one of the vanishing cycles $a$, $b$ and $c$.
\begin{thm}\label{thmB}
Suppose $f$ is in Case (B) and the labels $a$, $b$ and $c$ are chosen so that $a$ is parallel to $d$.
Then the $6$-tuple $\begin{pmatrix}
V_{aa} & V_{bb} & V_{cc}\\
V_{ba} & V_{cb} & V_{ac}
\end{pmatrix}$ is one of the following up to reflection:
\[
\begin{pmatrix}
S^1 \times S^2 & S^3 & S^3\\
S^3 & L(1 + \epsilon, 1) & S^3
\end{pmatrix}
, \;
\begin{pmatrix}
S^1 \times S^2 & L(4, 1) & L(4, 1)\\
S^3 & L(4 + \epsilon, 1) & S^3
\end{pmatrix},
\] where $\epsilon \in \{-1, 1\}$.
\end{thm}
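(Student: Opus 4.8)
## Proof proposal for Theorem \ref{thmB}

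\textbf{Setup.}\textbf{Setup and strategy.}
The plan is to reduce the determination of all six vertical $3$-manifolds to a computation of geometric intersection numbers of simple closed curves on the torus $\Sigma_1$. Since $f$ is a simplified $(2,0)$-trisection map, $\Sigma_1$ is a torus, and the three vanishing cycles $a,b,c$ of the outer cusped circle lie on it; as that circle carries three cusps, the curves $a,b,c$ are mutually transverse with $|a\cap b|=|b\cap c|=|c\cap a|=1$. Hence, after a change of basis of $H_1(\Sigma_1;\mathbf{Z})\cong\mathbf{Z}^2$, I may take $a=(1,0)$, $b=(0,1)$ and $c=(1,\pm1)$; the two signs for $c$ are interchanged by the reflection, so it suffices to treat $c=(1,1)$. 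In Case (B) the curve $d$ is isotopic to $a$, so $d=(1,0)$, and, by Lemma~\ref{deformationLem} together with \cite{hayano}, $f$ is in the normal form pictured on the right of Figure~\ref{deformation}, with $\mu=t_d^{\pm1}$ or $t_d^{\pm4}$; write $\mu=t_d^{sk}$ with $k\in\{1,4\}$ and $s\in\{+1,-1\}$.

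\textbf{Reducing each $V_{ij}$ to a torus gluing.}
Next I would identify $V_{ij}=f^{-1}(\gamma_{ij})$ with a union of two solid tori glued along $\Sigma_1$: attaching a $2$-handle to $\Sigma_1\times[0,1]$ along a vanishing cycle $x$ at a fold-crossing and capping the resulting sphere with a $3$-handle converts that end into a solid torus with meridian $x$, so $V_{ij}\cong V_x\cup_{\Sigma_1}V_y$ for a suitable pair of slopes $\{x,y\}$, which is a lens space (possibly $S^1\times S^2$ or $S^3$) with $|H_1|$ equal to the geometric intersection number $\Delta(x,y)$, and with second parameter read off from the expression of $y$ in a basis dual to $x$. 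Tracking the reference paths of Figure~\ref{simGenus2TriMonodomyAndPath} through Hayano's deformation, one sees that the diagonal arc $\gamma_{ii}$ encircles the inner cusped circle, so its pair is $\{i,\mu(i)\}$; the off-diagonal arcs $\gamma_{ba}$ and $\gamma_{ac}$ do not, so their pairs are $\{a,b\}$ and $\{a,c\}$; and $\gamma_{cb}$ picks up the monodromy, so its pair is $\{c,\mu(b)\}$ (equivalently $\{\mu(c),b\}$, which yields the same manifold). Carrying out this bookkeeping precisely — deciding which crossings acquire $\mu$, and checking that the gluings produce $L(n,1)$ rather than some other $L(n,q)$ (up to reflection, i.e.\ up to taking mirror images) — is the step I expect to be the main obstacle; it is exactly where Hayano's explicit normal form and an orientation check are needed.

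\textbf{The computation.}
With this in place the rest is immediate. As $a\cdot d=0$, we have $\mu(a)=a$, hence $V_{aa}=V_a\cup_{\Sigma_1}V_a\cong S^1\times S^2$. Since $b\cdot d=c\cdot d=\pm1$, a direct computation of $t_d^{sk}$ gives $\mu(b)=(-sk,1)$ and $\mu(c)=(1-sk,1)$, so $\Delta(b,\mu(b))=\Delta(c,\mu(c))=k$; thus $V_{bb}=V_{cc}\cong S^3$ when $k=1$ and $V_{bb}=V_{cc}\cong L(4,1)$ when $k=4$. For the off-diagonal entries, $\Delta(a,b)=\Delta(a,c)=1$ gives $V_{ba}=V_{ac}\cong S^3$, while $\Delta(c,\mu(b))=|1-sk|$, which equals $0$ or $2$ for $k=1$ and $3$ or $5$ for $k=4$ according to the sign $s$; computing the second parameter shows $V_{cb}\cong L(1+\epsilon,1)$ when $k=1$ and $V_{cb}\cong L(4+\epsilon,1)$ when $k=4$, where $\epsilon\in\{-1,1\}$ records $s$ (here $L(0,1)=S^1\times S^2$ and $L(1,1)=S^3$). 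Assembling the entries yields exactly the two $6$-tuples of the statement. Finally I would check that the reflection carries this list to itself: for the first tuple the relevant lens spaces ($L(0,1)$ and $L(2,1)$) are amphichiral so it acts trivially, and for the second tuple it interchanges $L(4,1)$, $L(3,1)$, $L(5,1)$ with their mirrors, which is absorbed in the phrase ``up to reflection''; this completes the proof of Theorem~\ref{thmB}.
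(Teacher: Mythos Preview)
Your overall strategy---reduce everything to slopes on $\Sigma_1\cong T^2$ and read off each $V_{ij}$ as a lens space from a pair of vanishing cycles---is exactly what the paper does. The final list you obtain is also correct. However, the ``bookkeeping'' step you flag as the main obstacle is where your argument actually goes wrong, in two places.

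First, your geometric description of the arcs is incorrect: by their definitions, \emph{all six} arcs $\gamma_{ij}$ separate the inner cusped circle from at least one outer edge, so every one of them winds around the inner circle, not just the diagonal ones. The paper handles this by introducing six reference paths and the cycles $a_2,b_2,c_2,a_2',b_2',c_2'$ on $\Sigma_1$ (as in the proof of Theorem~\ref{thmA}), related by $a_2'=\mu^{-1}(a_2)$, $b_2'=\mu(b_2)$, $c_2=\mu(c_2')$; the correct pairs are then $V_{ba}=M(b_2,a_2')$, $V_{ac}=M(a_2,c_2')$, $V_{cb}=M(c_2,b_2)$. Your formulas $V_{ba}=M(a,b)$ and $V_{ac}=M(a,c)$ come out right only because in Case~(B) one has $\mu(a)=a$, so $a_2'=a_2$; the reasoning you give (``these arcs do not encircle'') would fail in Case~(A).

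Second, your parenthetical claim that $\{c,\mu(b)\}$ and $\{\mu(c),b\}$ yield the same $3$-manifold is false: applying a self-homeomorphism of $\Sigma_1$ gives $M(c,\mu(b))\cong M(\mu(c),\mu^2(b))$, not $M(\mu(c),b)$. With your own values $c=(1,1)$, $b=(0,1)$, $\mu(b)=(-sk,1)$, $\mu(c)=(1-sk,1)$ one finds $\Delta(c,\mu(b))=|1+sk|$ while $\Delta(\mu(c),b)=|1-sk|$, and it is the latter that matches the paper's $V_{cb}=M(c_2,b_2)=M(\mu(c),b)$. Your list survives only because, as $s$ runs over $\{\pm1\}$, both expressions give the same set ($\{0,2\}$ for $k=1$, $\{3,5\}$ for $k=4$), so the discrepancy is silently absorbed into the parameter $\epsilon$. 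If you want a correct proof, replace the hand-waved encirclement discussion by the explicit six reference paths, and use the pair $\{\mu(c),b\}$ for $V_{cb}$.
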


Before proving these theorems, we introduce a lemma about Dehn twists. Let $T^2$ be a torus and $\gamma_1 \cdot \gamma_2$ denote the algebraic intersection number of simple closed curves $\gamma_1$ and $\gamma_2$ on an oriented surface.
\begin{lem}\label{DehnTwist}
Let $\gamma \subset T^2$ be an essential simple closed curve, representing an element $\begin{pmatrix}
p\\
q
\end{pmatrix}$ in $H_1(T^2; \mathbf{Z})$. Then, the representation matrix of the right-handed Dehn twist  $t_\gamma$ along $\gamma$ is 
$\begin{pmatrix}
1 - pq & p^2\\
-q^2		& 1 + pq
\end{pmatrix}$.
\end{lem}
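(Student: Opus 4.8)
The plan is to compute the action of $t_\gamma$ on $H_1(T^2;\mathbf{Z})$ directly from the transvection (Picard--Lefschetz) formula for Dehn twists. Recall that for an oriented surface $\Sigma$ and a simple closed curve $\gamma$, after choosing an orientation of $\gamma$ the right-handed Dehn twist $t_\gamma$ acts on homology by
\[
(t_\gamma)_*(x)\;=\;x-(x\cdot\gamma)\,\gamma,
\]
where $x\cdot\gamma$ denotes the algebraic intersection number. The right-hand side does not depend on the chosen orientation of $\gamma$ (reversing it negates both the vector $\gamma$ and the coefficient $x\cdot\gamma$), so the expression is well defined for the unoriented curve $\gamma$, and correspondingly the matrix we will obtain is invariant under $(p,q)\mapsto(-p,-q)$. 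I would either cite this transvection formula from a standard reference on mapping class groups or, since only $T^2$ is involved, verify it in the model case described below.

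Next I fix a basis $e_1,e_2$ of $H_1(T^2;\mathbf{Z})$ with $e_1\cdot e_2=1$, compatible with the orientation of $T^2$, and write $\gamma=p\,e_1+q\,e_2$. Then $e_1\cdot\gamma=q$ and $e_2\cdot\gamma=-p$, so the displayed formula gives
\[
(t_\gamma)_*(e_1)=e_1-q\gamma=(1-pq)\,e_1-q^2\,e_2,\qquad
(t_\gamma)_*(e_2)=e_2+p\gamma=p^2\,e_1+(1+pq)\,e_2.
\]
Writing these as the columns of a matrix yields precisely $\begin{pmatrix}1-pq & p^2\\ -q^2 & 1+pq\end{pmatrix}$, which is the assertion. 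As a consistency check, the determinant of this matrix is $(1-pq)(1+pq)+p^2q^2=1$, as it must be since $t_\gamma$ is orientation preserving, and for $(p,q)=(0,1)$ one recovers the elementary matrix $\begin{pmatrix}1&0\\-1&1\end{pmatrix}$, the twist along the second coordinate circle.

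If one prefers not to invoke the transvection formula, the alternative route is conjugation to the model case: since $\gamma$ is essential we have $\gcd(p,q)=1$, so there is $A\in\mathrm{SL}(2,\mathbf{Z})$ with $Ae_2=\gamma$; realizing $A$ by an orientation-preserving diffeomorphism of $T^2$ identifies $t_\gamma$ with $A\,t_{e_2}\,A^{-1}$ in the mapping class group, and the claimed matrix drops out of the $2\times 2$ conjugation $A\begin{pmatrix}1&0\\-1&1\end{pmatrix}A^{-1}$, the entries of $A$ cancelling via $\det A=1$. The only point that genuinely needs care --- and hence the ``main obstacle'', such as it is --- is matching the orientation conventions of the paper (the product orientation of fibre times arc) with the sign appearing in the transvection formula, i.e. confirming that the right-handed twist is $x\mapsto x-(x\cdot\gamma)\gamma$ rather than $x\mapsto x+(x\cdot\gamma)\gamma$. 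I would settle this once and for all by inspecting the model twist $t_{e_2}$ against a picture of $T^2$, after which the general statement is the mechanical computation above.
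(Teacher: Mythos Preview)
Your proof is correct and follows essentially the same approach as the paper: both apply the transvection formula $x\mapsto x-(x\cdot\gamma)\gamma$ to the two basis vectors of $H_1(T^2;\mathbf{Z})$ and read off the resulting columns. Your added consistency checks and the alternative conjugation argument are extras not in the paper, but the core computation is identical.
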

\begin{proof}
The map $t_\gamma$ sends $\begin{pmatrix}
1\\
0
\end{pmatrix}$
to
$
 \begin{pmatrix}
1\\
0
\end{pmatrix}
- 
\left(\begin{pmatrix}
1\\
0
\end{pmatrix}
\cdot \begin{pmatrix}
p\\
q
\end{pmatrix}\right)
\begin{pmatrix}
p\\
q
\end{pmatrix} 
=
\begin{pmatrix}
1 - pq \\
-q^2
\end{pmatrix}$
and
$\begin{pmatrix}
0\\
1
\end{pmatrix}$
to
$\begin{pmatrix}
0\\
1\end{pmatrix} -
\left(\begin{pmatrix}
0\\
1\end{pmatrix}
\cdot
\begin{pmatrix}
p\\
q
\end{pmatrix} \right)
\begin{pmatrix}
p\\
q
\end{pmatrix}
=
\begin{pmatrix}
p^2 \\
1 + pq
\end{pmatrix}.$
\end{proof}
\vspace{5mm}
In the following proofs, the notation of double sign correspondence is used.
The notation $M(T, \alpha, \beta)$ means the closed $3$-manifold obtained from a thickening $T \times [0, 1]$ of a torus $T$ by attaching  $2$-handles along essential simple closed curves $\alpha$ on $T \times \{0\}$ and $\beta$ on $T \times \{1\}$ and filling the boundary $3$-balls.
\vspace{5mm}
\begin{proof}[Proof of Theorem~\ref{thmA}]
Draw the dotted arcs on $f(X)$ as in Figure~\ref{refPathPrThmA}. The fiber over $p_1$ is a torus, denoted by $\Sigma_1$, and if we move from $p_1$ to a point on the upper edge along the dotted curve, then a simple closed curve $a_2$ on $\Sigma_1$ shrinks to a point. The fiber $\Sigma_1$ over $p_1$ is called a reference fiber, the dotted curve is called a reference path and the simple closed curve $a_2$ is called a vanishing cycle along this reference path. Let $b_2, c_2, a'_2, b'_2$ and $c'_2$ be vanishing cycles on $\Sigma_1$ along the corresponding reference paths shown in Figure~\ref{refPathPrThmA}. In Case (A), $\mu = t^{\pm1}_d$ by Lemma~\ref{deformationLem}.  Since $a_2$ and $b_2$ vanish at the same cusp, choosing the orientations of $a_2$ and $b_2$ suitably, we may identify $H_1(\Sigma_{1};\mathbf{Z})$ with $\mathbf{Z}^2$ so that $a_2$ and $b_2$ represent the elements 
\[
[a_2]=\begin{pmatrix} 1 \\ 0 \end{pmatrix},\quad
[b_2]=\begin{pmatrix} 0 \\ 1 \end{pmatrix}.\quad
\]
Since $b_2$ and $c'_2$ vanish at the same cusp, choosing the orientation of $c'_2$ suitably, we may set $[c'_2]=\begin{pmatrix} -1 \\ q \end{pmatrix}$ with $q \in \mathbf{Z}$.
Since $d$ intersects $a_2$ once transversely, choosing the orientation of $d$ suitably, we may set $[d]=\begin{pmatrix} r \\ 1 \end{pmatrix}$ with $r \in \mathbf{Z}$. Then, by Lemma~\ref{DehnTwist}, we have $[a_2']=[t_d^{\mp 1}(a_2)]= \begin{pmatrix} p \\ \pm1 \end{pmatrix}$, where $p = 1 \pm r$.
\begin{figure}[htbp]
\begin{center}
\includegraphics[clip, width=6cm, bb=224 567 369 712]{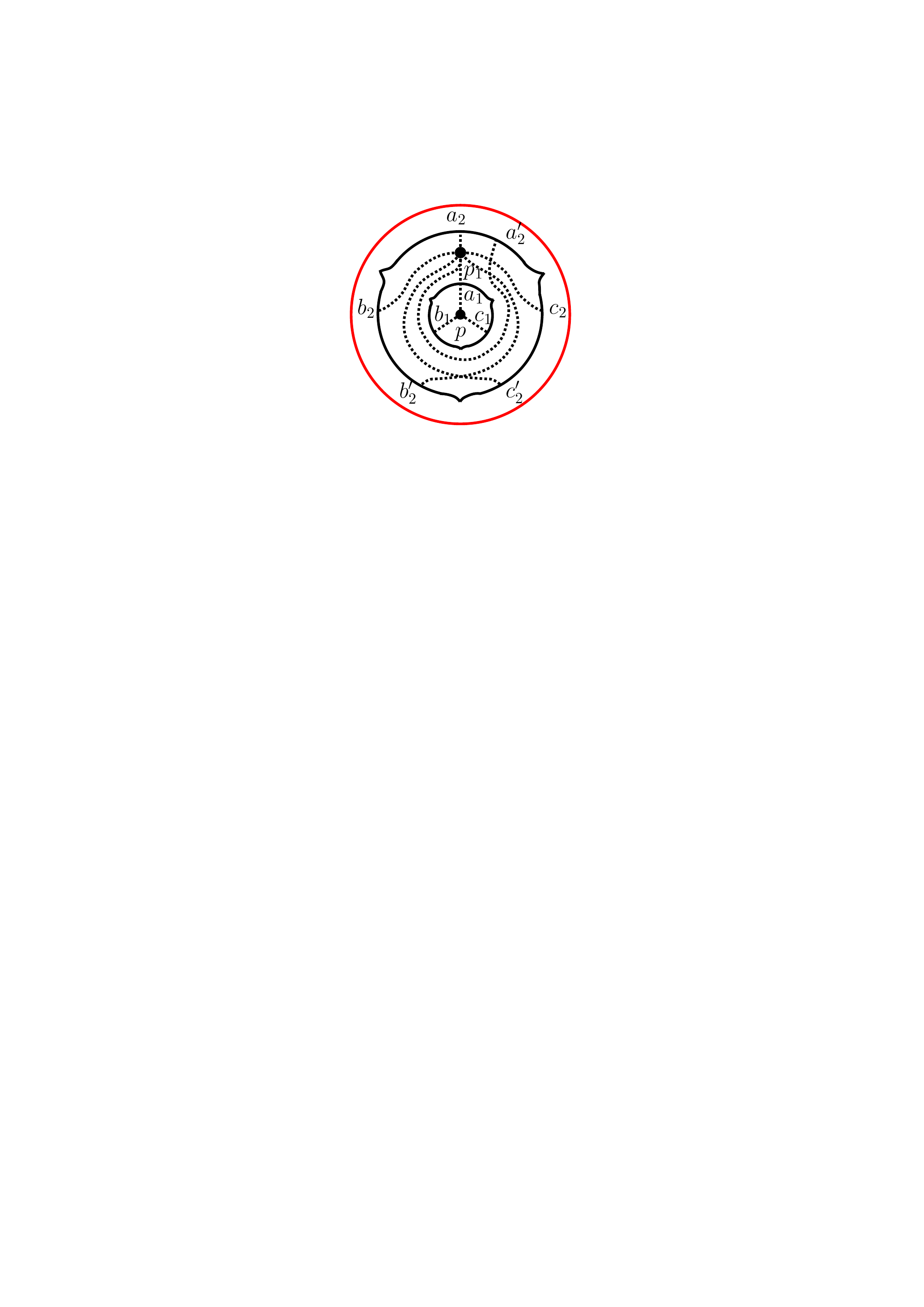}
\end{center}
\caption{}
\label{refPathPrThmA}
\end{figure}
\noindent
Since $a'_2$ and $c'_2$ vanish at the same cusp, we may set $a_2'\cdot c_2'=\epsilon_1 \in \{-1, 1\}$. Then we have $pq=\epsilon_1 \mp 1$ as
\[
   \epsilon_1=a_2'\cdot c_2'=\mathrm{det}
\begin{pmatrix}
p & -1\\
\pm1 & q\\
\end{pmatrix}=pq \pm 1.
\]

First, we study the case where either $V_{ba}$ or $V_{ac}$ is $S^1 \times S^2$. By applying a reflection, which means by exchanging the labels $b$ and $c$, if necessary, we assume $V_{ba}$ is $S^1 \times S^2$. 
Since $V_{ba}=M(\Sigma_1, b_2, a'_2)$, $a'_2$ and $b_2$ are parallel, meaning $p = 0$. Hence $\epsilon_1 = a_2'\cdot c_2'= \pm1$ and $r = \mp 1$.

Suppose $\mu=t_d$. Then  $r=-1$. By Lemma~\ref{DehnTwist}, we have
\begin{equation*}
[b'_2] =  [t_d(b_2)] = 
\begin{pmatrix}
2 & 1\\
-1 & 0
\end{pmatrix}
\begin{pmatrix}
0 \\ 1
\end{pmatrix}
= \begin{pmatrix}
1 \\ 0
\end{pmatrix},
\end{equation*} 
\begin{equation*}
[c_2] =  [t_d(c'_2)] = 
\begin{pmatrix}
2 & 1\\
-1 & 0
\end{pmatrix}
\begin{pmatrix}
-1 \\ q
\end{pmatrix}
= \begin{pmatrix}
q-2 \\ 1
\end{pmatrix}.
\end{equation*}
Since it is in Case (A) and $d =\begin{pmatrix}
-1 \\ 1
\end{pmatrix}$, we have $q \neq 1$. 
We obtain $V_{bb} = S^3, V_{ba} = S^1 \times S^2, V_{cb} = L(q -2, 1), V_{ac} = L(q, - 1).$
Remark that the orientation of the arc $\gamma_{ij}$ is counter-clockwise and the orientation of $V_{ij}$ is  given so that it coincides with the product orientation of the fiber and $\gamma_{ij}$.
For $V_{cc} = M(\Sigma, c_2, c'_2)$, setting $T = \begin{pmatrix} 
1 & 0\\
q & 1
\end{pmatrix}$ we have
$T [c_2] = \begin{pmatrix}q-2\\(q - 1)^2\end{pmatrix}$ and
$T [c'_2] = \begin{pmatrix}- 1\\0 \end{pmatrix}$.
Hence $V_{cc} = \overline{L((q - 1)^2, q - 2)} = L((q - 1)^2, q)$. Obviously, $V_{aa}$ is $S^3$. Thus we have the first $6$-tuple with $\epsilon = 1$ in the assertion. 

Suppose $\mu=t_d^{-1}$. Then $r=1$. By Lemma~\ref{DehnTwist}, we have
\begin{equation*}
[b'_2] =  [t^{-1}_d(b_2)] = 
\begin{pmatrix}
2 & -1\\
1 & 0
\end{pmatrix}
\begin{pmatrix}
0 \\ 1
\end{pmatrix}
= \begin{pmatrix}
-1 \\ 0
\end{pmatrix},
\end{equation*}
\begin{equation*}
[c_2] =  [t^{-1}_d(c'_2)] = 
\begin{pmatrix}
2 & -1\\
1 & 0
\end{pmatrix}
\begin{pmatrix}
-1 \\ q
\end{pmatrix}
= \begin{pmatrix}
-2-q \\ - 1
\end{pmatrix}.
\end{equation*}
Since it is in Case (A) and $d =\begin{pmatrix}
1 \\ 1
\end{pmatrix}$, we have $q \neq -1$.
We obtain $V_{bb} = S^3, V_{ba} = S^1 \times S^2$, $V_{cb} = L(q + 2, 1)$ and $V_{ac} = L(q,  - 1)$.
For $V_{cc} = M(\Sigma, c_2, c'_2)$, setting $T = \begin{pmatrix} 
0 & -1\\
1 & -2 - q
\end{pmatrix}$ we have $T [c_2] =  \begin{pmatrix}1\\0\end{pmatrix}$ and $T [c'_2] = \begin{pmatrix}- q\\- (q + 1)^2\end{pmatrix}$.
Hence $V_{cc} = L((q + 1)^2, q)$. By replacing $q$ into $-q$, we have the 6-tuple 
\[
\begin{pmatrix}
V_{aa} & V_{bb} & V_{cc}\\
V_{ba} & V_{cb} & V_{ac}
\end{pmatrix}
=
\begin{pmatrix}
S^3 & S^3 & L((q - 1)^2, - q)\\
S^1 \times S^2 & L(q - 2, -1) & L(q,  1)
\end{pmatrix}
\] with $q \neq 1$.
This is the first 6-tuple with $\epsilon = -1$ in the assertion.

Next, we consider the case where both of  $V_{ba}$ and $V_{ac}$ are not $S^1 \times S^2$. Since $M(\Sigma_1, b_2, a'_2)$ and $M(\Sigma_1,a_2, c'_2)$ are not $S^1 \times S^2$, $pq \neq 0$. Hence $pq = \epsilon_1 \mp 1$ is either $-2$ or $2$. In particular, $p\in \{-2, -1, 1, 2\}$.

If $p = 1$, then $r = 0$ as $p = 1 \pm r$. We have $q = \mp2$ since $q = \epsilon \mp 1$ and $q \neq 0$, and $[d] = 
\begin{pmatrix}
0\\
1
\end{pmatrix}$. 
Since $b_2$ is parallel to $d$, it is not in Case (A)

If $p = -1$, we have $r = \mp 2$. Since $q = \pm 1 - \epsilon_1$ and $q \neq 0$, we have $q =\pm 2$. Since $[d] = 
\begin{pmatrix}
\mp 2\\
1
\end{pmatrix}$, the representation matrix of $\mu = t^{\pm1}_d$ is
$\begin{pmatrix}
3 & \pm4\\
\mp 1 & -1
\end{pmatrix}$ by Lemma~\ref{DehnTwist}. Hence we have
\[
[b'_2] = [\mu(b_2)] = 
\begin{pmatrix}
3 & \pm4\\
\mp 1 & -1
\end{pmatrix}
\begin{pmatrix}
0\\
1
\end{pmatrix}
=
\begin{pmatrix}
\pm4\\
-1
\end{pmatrix},
\] 
\[
[c_2] = [\mu(c'_2)] =
\begin{pmatrix}
3 & \pm4\\
\mp 1 & -1
\end{pmatrix}
\begin{pmatrix}
-1\\
\pm 2
\end{pmatrix} =
\begin{pmatrix}
5\\
\mp 1
\end{pmatrix}.
\] Setting $T = \begin{pmatrix} 
0 & \mp 1\\
\pm 1 & 5
\end{pmatrix}$, we have $T [c_2] =  \begin{pmatrix}1\\0\end{pmatrix}$ and $T [c'_2] = \begin{pmatrix}- 2\\\pm 9\end{pmatrix}$. Thus we obtain $V_{aa} = S^3, V_{bb} = L(\pm4, -1), V_{cc} = L(\pm9, -2), V_{ba} = S^3, V_{cb} = L(\pm5, -1)$ and $V_{ac} = L(2, 1)$. 
This is the reflection of the second $6$-tuple in the assertion.

If $p = 2$, then we have $r = \pm1$. Since $2q = \epsilon_1 \mp 1$ and $q \neq 0$, we have $q = \mp 1$. Since $[d] = \begin{pmatrix}
\pm 1 \\
1
\end{pmatrix},$ the representation matrix of $\mu = t^{\pm 1}_{d}$ is $\begin{pmatrix}
0 & \pm 1 \\
\mp1 & 2
\end{pmatrix}$ by Lemma~\ref{DehnTwist}. Then we have
\[
[b'_2] = [\mu(b_2)] = 
\begin{pmatrix}
0 & \pm 1 \\
\mp1 & 2
\end{pmatrix}
\begin{pmatrix}
0\\
1
\end{pmatrix}
=
\begin{pmatrix}
\pm 1\\
2
\end{pmatrix},
\] 
\[
[c_2] = [\mu(c'_2)] =
\begin{pmatrix}
0 & \pm 1 \\
\mp1 & 2
\end{pmatrix}
\begin{pmatrix}
-1\\
\mp 1
\end{pmatrix} =
\begin{pmatrix}
-1\\
\mp 1
\end{pmatrix}.
\]
Since $c_2$ is parallel to $d$, it is not in Case (A).

If $p = -2$, then we have $r = \mp3$. Since $2q = \pm 1 - \epsilon_1$ and $q \neq 0$ we have $q =\pm 1$. Since $[d] = \begin{pmatrix}
\mp3 \\
1
\end{pmatrix}$, the representation matrix of $\mu = t^{\pm1}_{d}$ is $
\begin{pmatrix}
4 & \pm9 \\
\mp1 & -2
\end{pmatrix}$ by Lemma~\ref{DehnTwist}. Then we have 
\[
[b'_2] = [\mu(b_2)] = 
\begin{pmatrix}
4 & \pm9 \\
\mp1 & -2
\end{pmatrix}
\begin{pmatrix}
0\\
1
\end{pmatrix}
=
\begin{pmatrix}
\pm9\\
-2
\end{pmatrix},
\] 
\[
[c_2] = [\mu(c'_2)] =
\begin{pmatrix}
4 & \pm9 \\
\mp1 & -2
\end{pmatrix}
\begin{pmatrix}
-1\\
\pm 1
\end{pmatrix} =
\begin{pmatrix}
5\\
\mp 1
\end{pmatrix}.
\]
Thus we obtain $V_{aa} = S^3,  V_{bb} = L(\pm9, -2), V_{ba} = L(2, 1)$, and $V_{ac} = S^3$. For $V_{cc}$ and $V_{cb}$, multiplying $T = \begin{pmatrix}
1 & \pm 4\\
\pm 1 & 5
\end{pmatrix},$ we have $T [c_2]  = \begin{pmatrix}
1\\
0
\end{pmatrix}, 
T [c'_2]  = \begin{pmatrix}
3\\
\pm 4
\end{pmatrix}$ and
$T [b_2]  = \begin{pmatrix}
\pm4\\
5
\end{pmatrix}.$ Hence $V_{cc} = L(\pm4, -1), V_{cb} = L(5, \mp1)$. This is the second $6$-tuple in the assertion. This completes the proof.
\end{proof}

\begin{proof}[Proof of Theorem~\ref{thmB}]
Let $a_2, b_2, c_2, a'_2, b'_2$ and $c'_2$ be the vanishing cycles on $\Sigma_{1}$ and identify $H_1(\Sigma_{1};\mathbf{Z})$ with $\mathbf{Z}^2$ so that $a_2$ and $b_2$ represent the elements 
\[
[a_2]=\begin{pmatrix} 1 \\ 0 \end{pmatrix},\quad
[b_2]=\begin{pmatrix} 0 \\ 1 \end{pmatrix}\quad
\] as in the proof of Theorem~\ref{thmA}. 
By the assumption, we may set $[d]=\begin{pmatrix} 1 \\ 0 \end{pmatrix}$. Hence $[a'_2] = [t^{\mp1}_{d}(a_2)] = \begin{pmatrix} 1 \\ 0 \end{pmatrix}$. Since $b_2$ and $c
'_2$ vanish at the same cusp and $a'_2$ and $c'_2$ vanish at the same cusp, we may set $[c'_2]=\begin{pmatrix} -1 \\ \epsilon_2 \end{pmatrix}$ with $\epsilon_2 \in \{-1, 1\}$. 

Suppose $\mu = t^{\pm1}_d.$ Then we have 
\begin{equation*}\label{proof2B}
[b'_2] = [\mu(b_2)] = [t^{\pm1}_d (b_2)] = 
\begin{pmatrix}
1 & \pm1\\
0 & 1
\end{pmatrix}
\begin{pmatrix}
0\\
1
\end{pmatrix}
=
\begin{pmatrix}
\pm1\\
1
\end{pmatrix},
\end{equation*}  
\begin{equation*}
[c_2] = [\mu(c'_2)] = [t^{\pm1}_d (c'_2)] = 
\begin{pmatrix}
1 & \pm1\\
0 & 1
\end{pmatrix}
\begin{pmatrix}
-1\\
\epsilon_2
\end{pmatrix} =
\begin{pmatrix}
-1\pm \epsilon_2\\
\epsilon_2
\end{pmatrix}.
\end{equation*} 
Thus we obtain $V_{aa} = S^1 \times S^2, V_{bb} = S^3, V_{cc} = S^3, V_{ba} = S^3, V_{cb} = L(-1\pm \epsilon_2, \epsilon_2)$ and $V_{ac} = S^3$. If $\pm\epsilon_2 =1$ then it is the first $6$-tuple in the assertion with $\epsilon = -1$ and if $\pm \epsilon_2 = -1$ then it is the with $\epsilon = -1$. 

If $\mu = t^{\pm4}_d$, then we have
\begin{equation*}
[b'_2] = [\mu(b_2)] = [t^{\pm4}_d (b_2)] = 
\begin{pmatrix}
1 & \pm4\\
0 & 1
\end{pmatrix}
\begin{pmatrix}
0\\
1
\end{pmatrix}
=
\begin{pmatrix}
\pm4\\
1
\end{pmatrix},
\end{equation*}  
\begin{equation*}
[c_2] = [\mu(c'_2)] = [t^{\pm4}_d (c'_2)] = 
\begin{pmatrix}
1 & \pm4\\
0 & 1
\end{pmatrix}
\begin{pmatrix}
-1\\
\epsilon_2
\end{pmatrix} =
\begin{pmatrix}
-1\pm 4\epsilon_2\\
\epsilon_2
\end{pmatrix}.
\end{equation*} 
So we have $V_{aa} = S^1 \times S^2, V_{bb} = L(\pm4, 1), V_{ba} = S^3,  V_{ac} = S^3.$ For $V_{cc}$ and $V_{cb}$, multiplying $T =\begin{pmatrix}
-1 & \pm 4\\
-\epsilon_2 & -1 \pm 4\epsilon_2
\end{pmatrix}$, we have 
$T [c_2] = \begin{pmatrix}
1\\
0
\end{pmatrix}, 
T [c'_2] = \begin{pmatrix}
1 \pm 4 \epsilon_2 \\
\pm 4
\end{pmatrix}$ and
$T [b_2] = \begin{pmatrix}
\pm 4\\
-1 \pm 4 \epsilon_2
\end{pmatrix}$.
Hence $V_{cc} = L(\pm4, 1)$ and $V_{cb} = L(-1 \pm 4 \epsilon_2, \pm 4) = L(\pm 4 - \epsilon_2, 1)$. 
Thus, we obtain
\[
\begin{pmatrix}
V_{aa} & V_{bb} & V_{cc}\\
V_{ba} & V_{cb} & V_{ac}
\end{pmatrix}
=
\begin{pmatrix}
S^1 \times S^2 & L(4, 1) & L(4, 1)\\
S^3 & L(4 \mp \epsilon_2, 1) & S^3
\end{pmatrix}
\]
after applying the reflection if necessary. This completes the proof.
 \end{proof}
 \subsection{Case : $\mu$ is trivial.}
 \begin{lem}\label{caseId}
If $\mu$ is the identity map, then the 6-tuple of vertical manifolds is
\[
\begin{pmatrix}
V_{aa} & V_{bb} & V_{cc}\\
V_{ba} & V_{cb} & V_{ac}
\end{pmatrix} =
\begin{pmatrix}
S^1 \times S^2 & S^1 \times S^2 & S^1 \times S^2\\
S^3 & S^3 &S^3
\end{pmatrix}
\]
\end{lem}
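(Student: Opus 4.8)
\textbf{Proof proposal for Lemma~\ref{caseId}.}

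The plan is to use the same reference-path and vanishing-cycle bookkeeping as in the proofs of Theorems~\ref{thmA} and~\ref{thmB}, but now exploit the hypothesis $\mu = \mathrm{id}_{\Sigma_1}$ to conclude that each pair of cycles defining a $V_{ij}$ is either parallel or a standard dual pair on the torus. First I would set up coordinates on $H_1(\Sigma_1;\mathbf{Z}) \cong \mathbf{Z}^2$: since $a_2$ and $b_2$ vanish at a common cusp they intersect once transversely, so after choosing orientations we may take $[a_2] = \begin{pmatrix} 1 \\ 0 \end{pmatrix}$ and $[b_2] = \begin{pmatrix} 0 \\ 1 \end{pmatrix}$. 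The key point is that when $\mu$ is trivial, travelling around the annular region between the two cusped circles brings each primed cycle back to its unprimed counterpart without any twisting, so that $[a'_2] = [a_2]$, $[b'_2] = [b_2]$ and $[c'_2] = [c_2]$ (up to sign), in contrast to the cases $\mu = t_d^{\pm 1}$ or $t_d^{\pm 4}$ treated before.

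The main step is then to identify each of the six manifolds $V_{ij} = M(\Sigma_1, \cdot, \cdot)$ using the standard fact that $M(T^2, \gamma_1, \gamma_2)$ is $S^1 \times S^2$ when $\gamma_1$ and $\gamma_2$ are isotopic and is $S^3$ (possibly after a change of basis) when $\gamma_1 \cdot \gamma_2 = \pm 1$; more generally $M(T^2, \gamma_1, \gamma_2) = L(|\gamma_1 \cdot \gamma_2|, \ast)$. Since $V_{aa} = M(\Sigma_1, a_2, a'_2)$ with $[a'_2] = [a_2]$, and similarly for $V_{bb}$ and $V_{cc}$, the three diagonal entries are all $S^1 \times S^2$. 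For the off-diagonal entries $V_{ba} = M(\Sigma_1, b_2, a'_2) = M(\Sigma_1, b_2, a_2)$, $V_{cb} = M(\Sigma_1, c_2, b'_2) = M(\Sigma_1, c_2, b_2)$ and $V_{ac} = M(\Sigma_1, a_2, c'_2) = M(\Sigma_1, a_2, c_2)$, I would use that consecutive cycles $a_2, b_2$, then $b_2, c_2$, then $c_2, a_2$ each vanish at a common cusp of the outer cusped circle and hence intersect once transversely; therefore each off-diagonal $V_{ij}$ is $S^3$. Assembling these gives precisely the stated $6$-tuple.

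The step I expect to need the most care is justifying that the identification of the primed cycles with the unprimed ones is literally an equality of isotopy classes (not merely homology classes) and, correspondingly, that $M(T^2,\gamma,\gamma) = S^1 \times S^2$ and $M(T^2,\gamma_1,\gamma_2) = S^3$ for transversely-once-intersecting curves — these rely on the fact that on the torus an essential simple closed curve is determined up to isotopy by its homology class, so homological computations suffice. The remaining potential subtlety, orientations of the $V_{ij}$, is harmless here since $S^1 \times S^2$ and $S^3$ are each orientation-reversing diffeomorphic to themselves, so the $6$-tuple in the assertion is unambiguous. This is the obvious case alluded to in the text, and no Dehn-twist matrix computation beyond $\mu = \mathrm{id}$ is required.
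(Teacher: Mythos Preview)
Your proof is correct and follows essentially the same idea as the paper's. The paper phrases the argument more geometrically---observing that when $\mu=\mathrm{id}$ the arc $\gamma_{aa}$ can be replaced by an arc $\omega_1'$ (one that does not wind around the inner cusped circle) without changing the preimage, and that the preimage of $\omega_1'$ is visibly $S^1\times S^2$---whereas you carry out the equivalent computation algebraically in the style of Theorems~\ref{thmA} and~\ref{thmB}, using $a_2'=a_2$, $b_2'=b_2$, $c_2'=c_2$ to read off each $M(\Sigma_1,\cdot,\cdot)$. One small point of bookkeeping: in the paper's reference-path conventions the cusp relations are literally $a_2\cdot b_2=\pm1$, $b_2\cdot c_2'=\pm1$, $a_2'\cdot c_2'=\pm1$ (not $b_2\cdot c_2$ and $c_2\cdot a_2$ directly), but since $\mu=\mathrm{id}$ identifies primed with unprimed cycles this is exactly what you use, and your remark about isotopy versus homology on the torus handles the needed justification.
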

\begin{proof}
Since $\mu$ is the identity, the vertical manifold over the arc $\omega_1$ shown on the left in Figure~\ref{caseIdPathMove} is homeomorphic to the one over the arc $\omega'_1$ shown on the right. Therefore, $V_{aa}$ is homeomorphic to $S^1 \times S^2$. The other vertical $3$-manifolds can be determined by the same manner.
\end{proof}
\begin{figure}[htbp]
\begin{center}
\includegraphics[clip, width=9cm, bb=128 548 482 712]{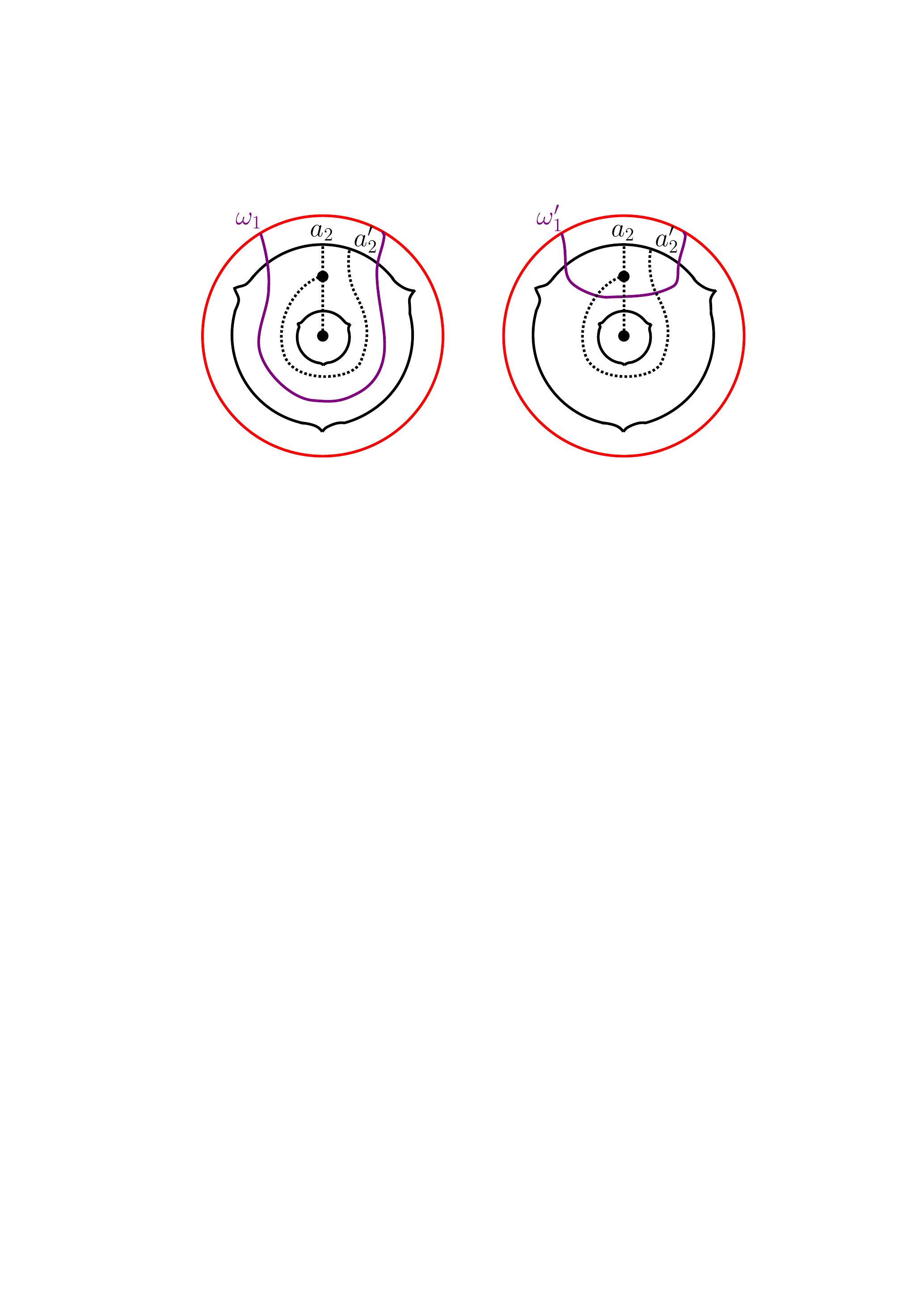}
\caption{Case : $\mu$ is the identity.}
\label{caseIdPathMove}
\end{center}
\end{figure}
 \section{Proof of Theorem~\ref{preimage} }

\begin{lem}\label{moves}
Let $f:X\to\mathbf{R}^2$ be a simplified $(2, 0)$-trisection map and $\omega \subset f(X)$ be a proper, simple, generic arc oriented counter-clockwisely. Let $M_i$ $(i =1, 2)$ be the oriented closed $3$-manifold obtained as the preimage of the arc $\omega_i$ in Figure~\ref{partsOfVerMfd}, where the orientation coincides with the product of those of $\omega_i$ and the fiber surface. Then, $f^{-1}(\omega)$ is either $S^3$ or a connected sum of finite copies of $S^1 \times S^2, M_1$ and $M_2$ and their mirror images.

\begin{figure}[htbp]
\begin{center}
\includegraphics[clip, width=8cm, bb=190 621 405 713]{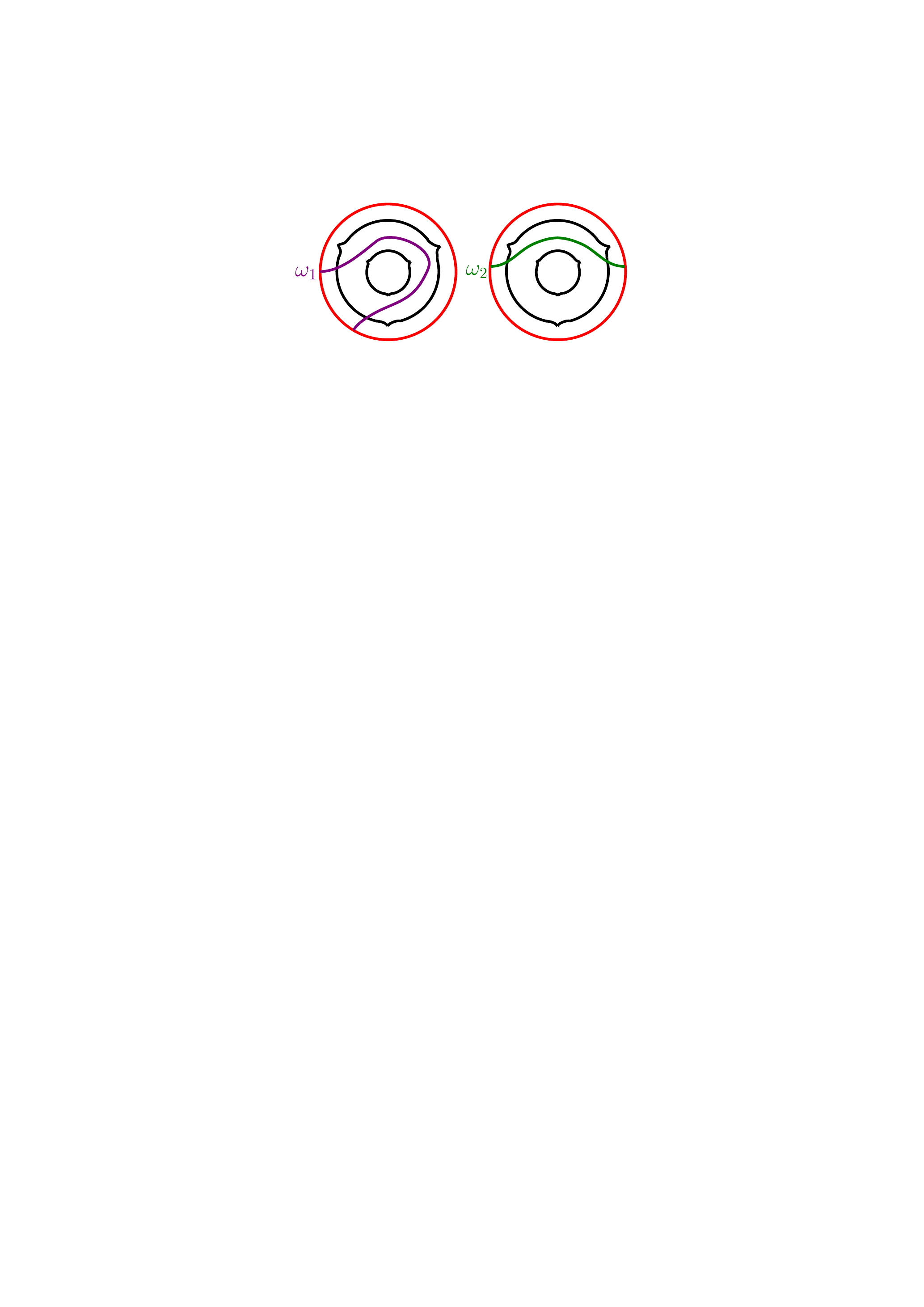}
\caption{The arcs $\omega_1$ and $\omega_2$.}
\label{partsOfVerMfd}
\end{center}
\end{figure}
\end{lem}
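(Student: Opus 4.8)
The plan is to analyze how a general proper generic arc $\omega$ meets the singular value set of $f$ and decompose $f^{-1}(\omega)$ accordingly. First I would set up the combinatorial bookkeeping: since $f$ is a simplified $(2,0)$-trisection map, the singular value set consists of the definite-fold circle $\partial D$ together with the two cusped indefinite-fold circles. Up to isotopy and the generic position hypothesis, $\omega$ can be perturbed so that it is transverse to the whole singular value set, misses all cusps, and so that consecutive intersection points with the indefinite-fold circles are as "local" as possible. The key point is that the behavior of $f^{-1}(\omega)$ is governed only by the cyclic sequence of edges $(e_a, e_b, e_c$ on the outer cusped circle, and the three inner edges$)$ that $\omega$ crosses, where the fiber surface goes from a point (on $\partial\omega$ lying outside $D$) and changes genus by $\pm 1$ at each indefinite-fold crossing.

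Next I would reduce $\omega$ by a sequence of elementary moves. When $\omega$ enters and leaves a region without touching any indefinite fold between two consecutive definite-fold crossings, that portion contributes nothing (it caps off trivially), so we may assume $\partial\omega$ lies just inside $\partial D$ on each end, over a disk fiber. Then each pair of consecutive indefinite-fold crossings of $\omega$ either (i) cancels — when the arc dips across a single edge and comes back, producing a summand $S^1\times S^2$ or $S^3$ — or (ii) is one of the "vertical bigon" configurations whose preimage is, by definition and by the constructions in Section 3, one of the manifolds $M_1$ or $M_2$ from Figure~\ref{partsOfVerMfd} (or its mirror, depending on the orientation of $\omega$). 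The point of the figure is that $\omega_1$ and $\omega_2$ are precisely the two local models: $\omega_1$ dips across one edge of a cusped circle and returns, and $\omega_2$ crosses from one cusped circle to the other and back, these being the only two ways a short subarc can interact with the cusped indefinite folds after isotopy. Any $\omega$ is isotopic rel nothing to a concatenation of such local pieces separated by trivial caps.

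Concretely, I would argue by induction on the number $2n$ of intersection points of $\omega$ with the indefinite-fold circles. The base case $n=0$ gives $f^{-1}(\omega)=S^3$ (a sphere fiber bounding a disk fiber at each end, or more precisely the double of a disk-fiber family, which is $S^3$). For the inductive step, pick an innermost pair of consecutive crossings along $\omega$ — a subarc $\omega'$ meeting the indefinite folds only at its two endpoints. Using the monodromy description (cases (1), (2), (3) of $\mu$) together with Lemma~\ref{deformationLem} and the deformations of Figure~\ref{deformation}, one checks that $\omega'$ can be isotoped into one of the standard local models, so $f^{-1}(\omega')$ is $S^1\times S^2$, $S^3$, $M_1^{\pm}$, or $M_2^{\pm}$; then one performs the standard "tube then compress" argument showing $f^{-1}(\omega) \cong f^{-1}(\omega')\,\#\, f^{-1}(\omega'')$ where $\omega''$ is the shortened arc with $2n-2$ crossings, and applies the inductive hypothesis. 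Summing over all innermost pieces gives the claimed connected-sum decomposition.

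The main obstacle is the inductive step, specifically verifying that an arbitrary innermost subarc $\omega'$ really is equivalent to $\omega_1$ or $\omega_2$ up to the moves that preserve the diffeomorphism type of the preimage, and that no other local model can occur. This requires care because the cusped circles have three edges each and an innermost subarc can a priori connect many combinations of edges on the two circles; one must use that the monodromy $\mu$ around the annular region between the two cusped circles is one of $\mathrm{id}, t_d^{\pm1}, t_d^{\pm4}$ to rule out or normalize the remaining configurations, and one must also be careful that the connected-sum splitting argument is valid — i.e., that the $2$-sphere separating the two summands is the preimage of a point on $\omega$ and genuinely splits $X$-locally as a connected sum, which follows since that fiber bounds a solid-handlebody family on the $\omega'$ side whenever the relevant vanishing cycles are arranged as in the local model. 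The orientation sign (mirror image) in the statement is then just the record of whether $\omega$ traverses each local model with its given counter-clockwise orientation or the reverse.
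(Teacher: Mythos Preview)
Your overall plan---reduce $\omega$ by local moves and exhibit $f^{-1}(\omega)$ as an iterated connected sum---is the right shape, but the inductive step as you have written it does not go through. The connected-sum splitting $f^{-1}(\omega)\cong f^{-1}(\omega')\#f^{-1}(\omega'')$ only makes sense when the cut is performed where the regular fiber is a $2$-sphere, i.e.\ in the outer annulus between the outer cusped circle $C_2$ and the definite-fold circle. Your ``innermost pair of consecutive crossings'' typically lies over the genus-$1$ or genus-$2$ region, so $f^{-1}(\omega')$ is not even a closed $3$-manifold there, and no ``tube then compress'' produces a connected-sum sphere. Relatedly, your description of $\omega_1$ and $\omega_2$ (``dips across one edge and returns'' and ``crosses from one cusped circle to the other and back'') is off: a pure dip-and-return is exactly the bigon producing an $S^1\times S^2$ summand, not $M_i$; the arcs $\omega_1,\omega_2$ of the lemma are proper arcs meeting only the outer cusped circle $C_2$, twice, as in the $\gamma_{ij}$ of Section~3. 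Finally, none of the monodromy classification (cases (1)--(3) for $\mu$, Lemma~\ref{deformationLem}, the deformations of Figure~\ref{deformation}) is needed or used for this lemma; invoking it here is a red herring.

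The paper's argument avoids these issues by separating the two circles. First it removes every intersection of $\omega$ with the \emph{inner} cusped circle $C_1$ using two elementary moves: a \emph{cusp move} (sliding a crossing past a cusp, which leaves the preimage unchanged) and a \emph{bigon move} (eliminating a bigon between $\omega$ and $C_1$, which contributes an $S^1\times S^2$ summand). Once $\omega$ misses $C_1$, one looks at the subarcs of $\omega$ lying in the genus-$0$ annulus between $C_2$ and the definite fold; for an outermost such subarc one pushes it out to the definite-fold circle, where the fiber is a point, and this legitimately splits $\omega$ into two proper arcs whose preimages recover $f^{-1}(\omega)$ as a connected sum. Iterating, each resulting piece meets only $C_2$, and after further cusp and bigon moves on $C_2$ each piece is isotopic to one of the two standard arcs $\omega_1,\omega_2$ (or a mirror), giving exactly the summands $S^1\times S^2$, $M_1^{\pm}$, $M_2^{\pm}$. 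The missing idea in your write-up is this two-stage reduction: clear the inner circle first, then cut only in the sphere-fiber region.
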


\begin{proof}
We first introduce two local moves shown in Figure \ref{move}.
The move on the left is named a {\it bigon move} and the one on the right is a {\it cusp move}.
The vertical manifold $f^{-1}(\gamma_1)$ is homeomorphic to $f^{-1}(\gamma'_1)$, and the vertical manifold $f^{-1}(\gamma_2)$ is  a connected sum of $f^{-1}(\gamma'_2)$ and $S^1 \times S^2$. 

\begin{figure}[htbp]
\begin{center}
\includegraphics[clip, width=10.5cm, bb=130 598 516 713]{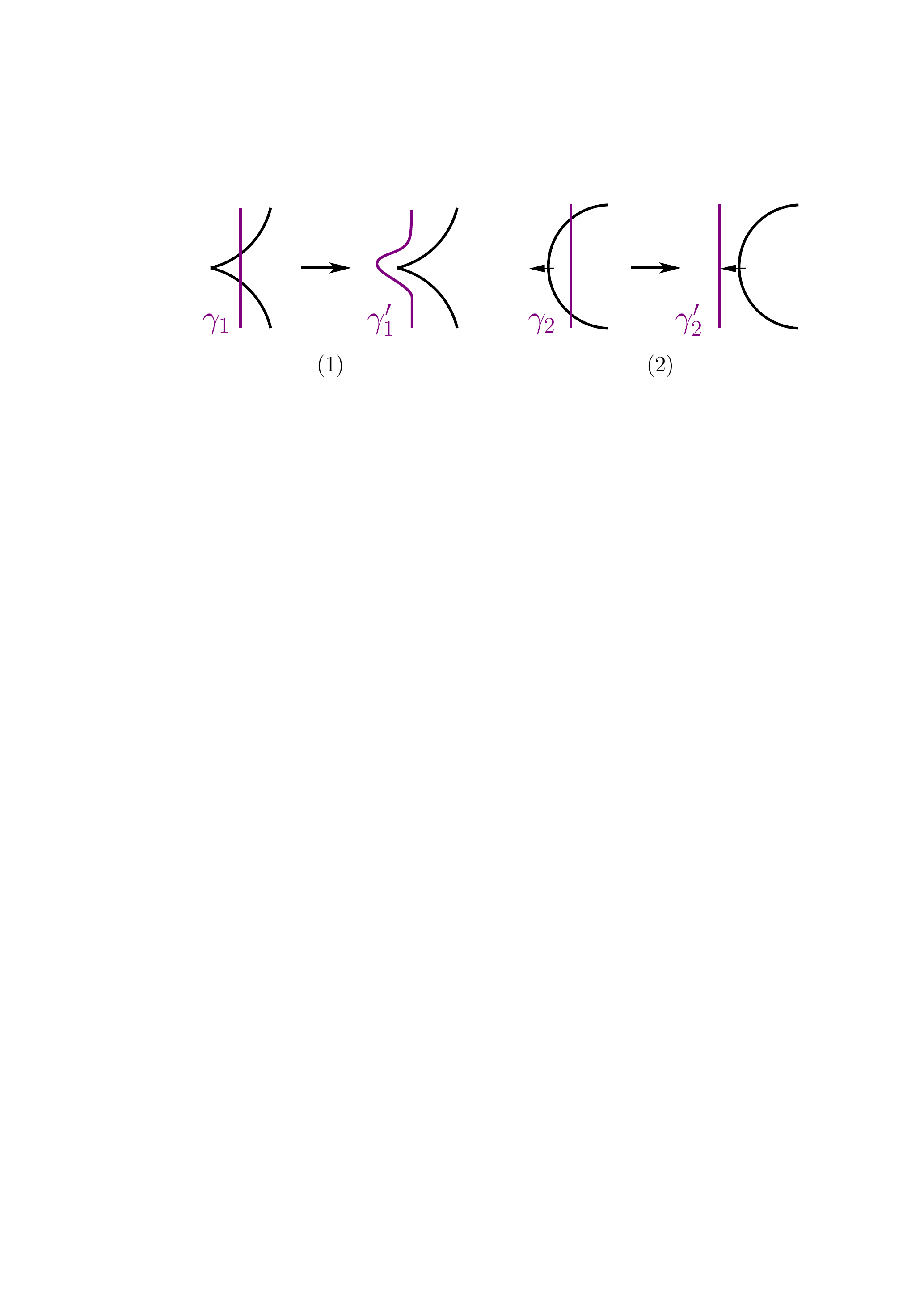}
\caption{(1) a cusp move, (2)  a bigon move.}
\label{move}
\end{center}
\end{figure}

The image of the indefinite folds consists of two concentric circles with cusps. Let $C_1$ be the inner circle and $C_2$ be the outer one. If $\omega$ passes near a cusp of $C_1$ as shown in Figure~\ref{move} (1), then we apply a cusp move to $\omega$. If $\omega$ and $C_1$ bound a disc as shown in Figure~\ref{move} (2), then we apply a bigon move to $\omega$. Applying these operations inductively, we may assume that $\omega$ does not intersect $C_1$.

Next we focus on the arcs on $\omega$ whose endpoints are on $C_2$ and which lies on the annulus between $C_2$ and the image of definite folds. For an outermost arc of these arcs we apply the operation shown in Figure~\ref{move2}.  Suppose that the arc $\omega$ is changed  into the union of the arcs $\omega'$ and $\omega''$ by this operation. Since the preimage of the intersection of an arc and the image of the definite folds is a set of points, the inverse of the above operation induces the following surgery for the $3$-manifolds $f^{-1}(\omega')$ and $f^{-1}(\omega'')$; remove a $3$-ball from each of $f^{-1}(\omega')$ and $f^{-1}(\omega'')$ and glue the two $2$-spheres appearing on the boundaries, which produces a connected sum of $f^{-1}(\omega')$ and $f^{-1}(\omega'')$. We apply this operation inductively so that there are no arcs whose endpoints are on $C_2$ and which lies on the annulus between $C_2$ and the image of definite folds. Then, applying cusp and bigon moves to the arcs decomposed by these operations inductively, we see that the preimages of the resulting arcs are finite copies of $S^1 \times S^2$, $M_1$ and $M_2$ and their mirror images. Thus we have the assertion.
\end{proof}
\begin{figure}[htbp]
\begin{center}
\includegraphics[clip, width=10.5cm]{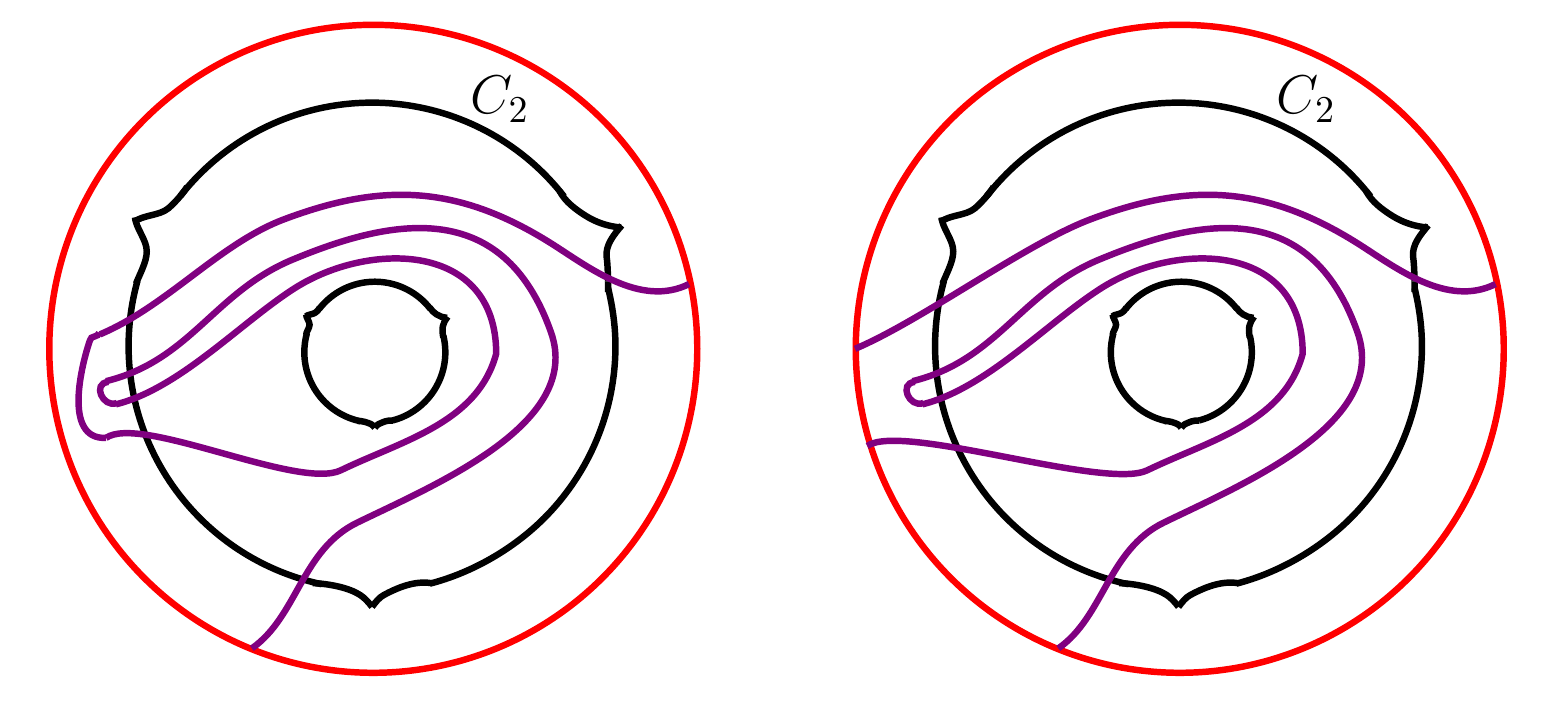}
\caption{Cutting the arc.}
\label{move2}
\end{center}
\end{figure}

\begin{proof}[Proof of Theorem~\ref{preimage}.]
Consider the first $6$-tuple 
\[
\begin{pmatrix}
V_{aa} & V_{bb} & V_{cc}\\
V_{ba} & V_{cb} & V_{ac}
\end{pmatrix} =
\begin{pmatrix}
S^3 & S^3 & L((q - 1)^2, q - 1 + \epsilon)\\
S^1 \times S^2 & L(q - 2, \epsilon) & L(q, -\epsilon)
\end{pmatrix}
\]
in Theorem~\ref{thmA}. By Lemma~\ref{moves}, the vertical manifolds in this case are connected sums of these manifolds and their mirror images. Checking the positions of the arcs $\gamma_{aa}, \gamma_{bb}, \gamma_{cc}, \gamma_{ba}, \gamma_{cb}$ and $\gamma_{ac}$, we see that possible combinations are
\[
(\gamma_{aa}, \gamma_{ba}), (\gamma_{aa}, \gamma_{ac}), (\gamma_{bb}, \gamma_{ba}), (\gamma_{bb}, \gamma_{cb}), (\gamma_{cc}, \gamma_{cb}), (\gamma_{cc}, \gamma_{ac}),
\] cf. Figure~\ref{20200807Mirror}.
The vertical manifolds corresponding to $(\gamma_{aa}, \gamma_{ba})$ are connected sums of finite copies of $S^1 \times S^2$.
The vertical manifolds corresponding to $(\gamma_{aa}, \gamma_{ac})$ are $\#^{\ell + \epsilon_1, \ell}L(q, -\epsilon)$ up to summands $S^1 \times S^2$ and orientation reversal. We may determine the vertical manifolds corresponding to $(\gamma_{bb}, \gamma_{ba})$ and $(\gamma_{bb}, \gamma_{cb})$ by the same manner.

A vertical manifold corresponding to $(\gamma_{cc}, \gamma_{cb})$ is the preimage of the arc in Figure~\ref{20200807Mirror} with closing the endpoints of the arcs suitably, where $r$ and $s$ are the numbers of the arcs.
Checking orientations of the arcs carefully, we conclude that it is $\#^{\ell + \epsilon_1, \ell}L((q-1)^2, q-1 + \epsilon)\#^{m, m+ \epsilon_2} L(q - 2, \epsilon)$ up to summands $S^1 \times S^2$ and orientation reversal, where $2\ell + \epsilon_1 = r$ and $2m + \epsilon_2 = s$. 

For a vertical manifold corresponding to $(\gamma_{cc}, \gamma_{ac})$ we may conclude that it is $\#^{\ell + \epsilon_1, \ell}L((q-1)^2, q-1 + \epsilon)\#^{m, m+ \epsilon_2} L(q , -\epsilon)$ up to summands $S^1 \times S^2$ and orientation reversal.

We may determine the vertical $3$-manifolds for the other $6$-tuples in Theorem~\ref{thmA} and also Theorem~\ref{thmB} by the same manner.
\end{proof}

\begin{figure}[htbp]
\begin{center}
\includegraphics[clip, width=5.5cm, bb=181 482 412 713]{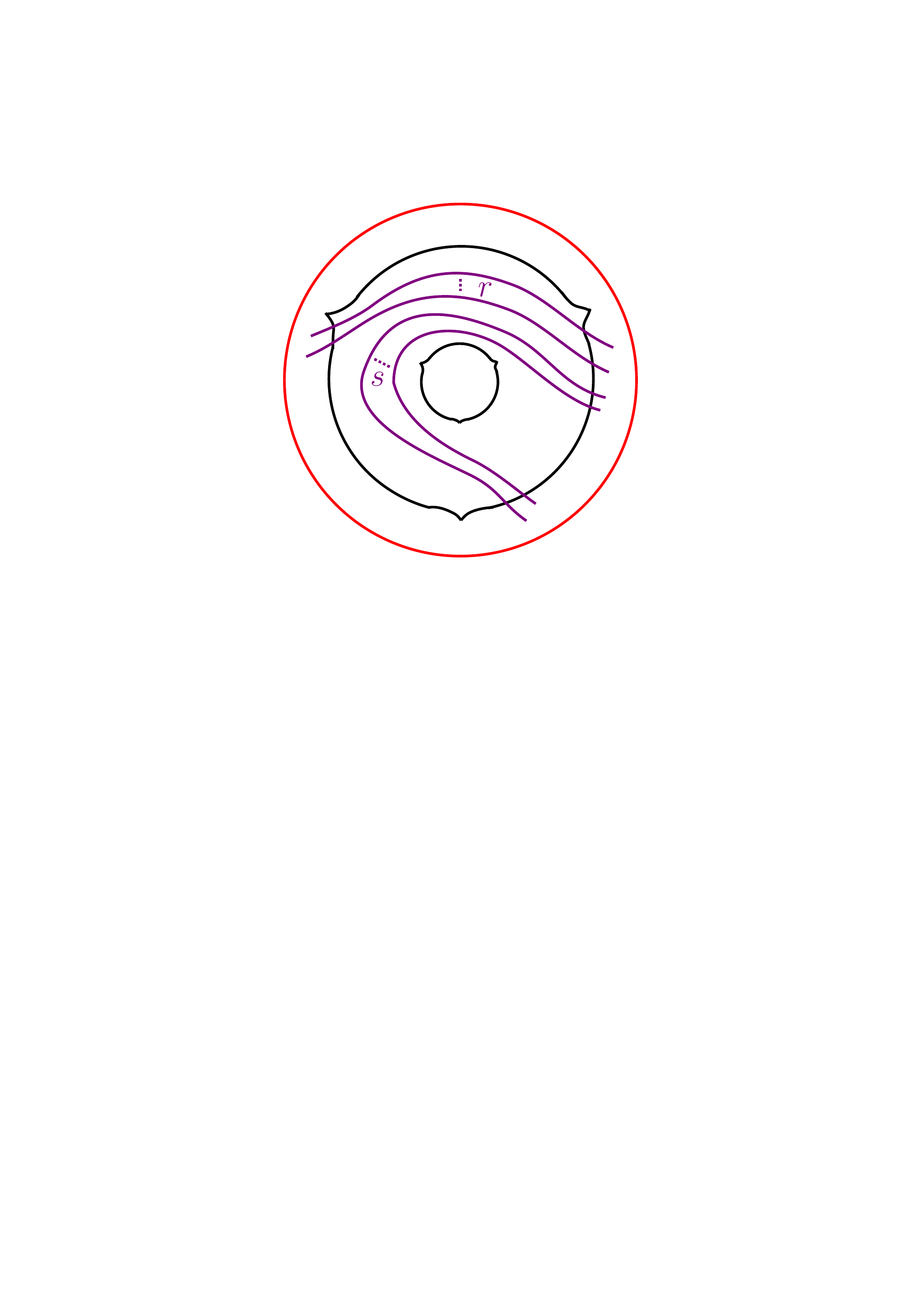}
\caption{The arcs corresponding to $\gamma_{cc}$ and $\gamma_{cb}$.}
\label{20200807Mirror}
\end{center}
\end{figure}

\section{Vertical $3$-manifolds determine $4$-manifolds}
\begin{thm}\label{determining4Mfd}
Let $f:X\to\mathbf{R}^2$ be a simplified $(2, 0)$-trisection map. The $4$-manifold $X$ is determined as  follows:
\begin{itemize}
\item[(1)] The $4$-manifold $X$ with the $6$-tuple $\begin{pmatrix}
S^3 & S^3 & L((q - 1)^2, q - 1 + \epsilon)\\
S^1 \times S^2 & L(q - 2, \epsilon) & L(q, -\epsilon)
\end{pmatrix}$ is $S^2 \times S^2$ if $q$ is even and $\mathbf{CP}^2 \# \overline{\mathbf{CP}^2}$ if $q$ is odd and $q \neq 1$.

\item[(2)] The $4$-manifold $X$ with $\begin{pmatrix}
S^3 & L(9, 2\epsilon) & L(4, \epsilon)\\
L(2, 1) & L(5, \epsilon) & S^3
\end{pmatrix}$ is $\mathbf{CP}^2 \# \mathbf{CP}^2$ if $\epsilon =-1$ and $\overline{\mathbf{CP}^2} \# \overline{\mathbf{CP}^2}$ if $\epsilon =1$.
\item[(3)] The $4$-manifold $X$ with $\begin{pmatrix}
S^1 \times S^2 & L(4, 1) & L(4, 1)\\
S^3 & L(4 + \epsilon, 1) & S^3
\end{pmatrix}$ is  $\mathbf{CP}^2 \# \mathbf{CP}^2$ if $\epsilon=1$ and $\mathbf{CP}^2 \# \overline{\mathbf{CP}^2}$ if $\epsilon =-1$.
\item[(4)] The $4$-manifold $X$ with $\begin{pmatrix}
S^1 \times S^2 & S^3 & S^3\\
S^3 & S^1 \times S^2 & S^3
\end{pmatrix}$ is $\mathbf{CP}^2 \# \overline{\mathbf{CP}^2}$.
\item[(5)] The $4$-manifold $X$ with $\begin{pmatrix}
S^1 \times S^2 & S^3 & S^3\\
S^3 & L(2, 1) & S^3
\end{pmatrix}$ is $\mathbf{CP}^2 \# \mathbf{CP}^2$ or $\overline{\mathbf{CP}^2} \# \overline{\mathbf{CP}^2}$.
\end{itemize}
\end{thm}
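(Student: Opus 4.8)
The plan is to promote each $6$-tuple to a handle decomposition of $X$ read off from the deformed trisection maps in Figure~\ref{deformation}, and then to recognize $X$ through its intersection form. First I would record the a priori constraints. Since $k=0$, every piece of the trisection is a $4$-ball, so $\pi_1(X)=1$; moreover a $(g,k)$-trisected manifold has $\chi(X)=2+g-3k$, which equals $4$ here, so $b_2(X)=2$. Hence, by the classification of genus-$2$ trisections of Meier and Zupan \cite{MZ}, $X$ is one of $S^2\times S^2$, $\mathbf{CP}^2\#\mathbf{CP}^2$, $\mathbf{CP}^2\#\overline{\mathbf{CP}^2}$, $\overline{\mathbf{CP}^2}\#\overline{\mathbf{CP}^2}$, and among these four $X$ is pinned down by the parity of its intersection form (only $S^2\times S^2$ is even) together with the signature $\sigma(X)\in\{-2,0,2\}$. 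So the theorem reduces to computing the parity and signature of the intersection form from the vanishing-cycle data already assembled in Section~3.

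Next I would extract a handle decomposition. In the deformed pictures, the preimage of the central disc is a neighborhood of a fibre surface, crossing the indefinite folds attaches $2$-handles along the vanishing cycles $a_2,b_2,c_2,a_2',b_2',c_2'$ (whose classes on $\Sigma_1=T^2$ were computed in the proofs of Theorems~\ref{thmA} and~\ref{thmB} via Lemma~\ref{DehnTwist}), and the definite-fold circle caps off a sphere fibre. Using the standardness of genus-$2$ trisections \cite{MZ} (equivalently, $\pi_1(X)=1$ and the absence of an $S^1\times S^2$ summand obstructing the cancellations), this simplifies to a Kirby diagram consisting of two $2$-handles attached to $S^3$, whose $2\times2$ linking matrix is exactly the intersection form of $X$. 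The off-diagonal entry is the algebraic intersection number read on $T^2$, while the diagonal entries (the framings) are constrained by the requirement that the boundary pieces of the $2$-handles match the vertical $3$-manifolds of the $6$-tuple — for instance a handle cobounding $S^1\times S^2$ forces a $0$ framing and one cobounding $L(m,1)$ forces framing $\pm m$. In this way every entry of the intersection form becomes an explicit function of the integers $q$ and $\epsilon$ (resp.\ of $\epsilon$) occurring in the $6$-tuple.

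Then I would run through the five families. For (1) the linking matrix has determinant $-1$, signature $0$, and diagonal entries of the parity of $q$: even $q$ gives the even form and hence $S^2\times S^2$, odd $q\neq1$ gives the odd indefinite form and hence $\mathbf{CP}^2\#\overline{\mathbf{CP}^2}$. For (2) one obtains an odd definite form of signature $-2\epsilon$, so $\mathbf{CP}^2\#\mathbf{CP}^2$ when $\epsilon=-1$ and $\overline{\mathbf{CP}^2}\#\overline{\mathbf{CP}^2}$ when $\epsilon=1$; the sign is legitimately determined here because the oriented lens spaces $L(9,2\epsilon)$, $L(4,\epsilon)$, $L(5,\epsilon)$ carry the orientation information. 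For (3) and (4) the same computation gives signature $0$ or $2$ as stated, the sign again fixed by the oriented entries. For (5), however, the $6$-tuple $\begin{pmatrix}S^1\times S^2&S^3&S^3\\ S^3&L(2,1)&S^3\end{pmatrix}$ is carried to itself by a reflection (each entry is invariant under orientation reversal), so it determines no global orientation; the linking matrix is odd with $|\sigma|=2$, forcing $X\in\{\mathbf{CP}^2\#\mathbf{CP}^2,\overline{\mathbf{CP}^2}\#\overline{\mathbf{CP}^2}\}$ with the two cases genuinely indistinguishable, which is exactly the exceptional clause and the content of Corollary~\ref{determining4MfdC}.

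I expect the main obstacle to be the framing bookkeeping in the second step: one must verify precisely how the framings of the two $2$-handles are encoded by the winding of the vanishing cycles on $\Sigma_1$, and keep the orientations of the fibres and of the $V_{ij}$ coherent throughout, since a single sign slip would interchange an $S^2\times S^2$ conclusion with an $\mathbf{CP}^2\#\overline{\mathbf{CP}^2}$ one or flip a signature. Once the dictionary ``vanishing-cycle data $\mapsto$ linking matrix'' is established, the remaining work in the third step is a routine sequence of $2\times2$ determinant and signature computations.
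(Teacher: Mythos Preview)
Your strategy differs from the paper's in two respects, and while the overall plan is viable, the central step is not yet an argument.

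The paper does not invoke the Meier--Zupan classification. For Cases~(1) and~(2), which occur only in Case~(A), it performs an explicit homotopy of $f$: an unwrinkle of the inner cusped circle followed by a sink produces the $4$-cusped picture on the left of Figure~\ref{deformation}. The curves $b_2,c_2',a_2',a_2$ (with the homology classes already computed in the proof of Theorem~\ref{thmA}) are then drawn on the standard torus in $S^3$ with surface framing; this is a Kirby diagram of $X$, and after cancelling the dotted circles against $a_2$ and $b_2$ one is left with a Hopf link whose framings (e.g.\ $0$ and $\epsilon q$ in Case~(1)) name $X$ directly. For Cases~(3)--(5), which occur only in Case~(B), the paper uses the deformation on the right of Figure~\ref{deformation} into two disjoint $3$-cusped circles: this exhibits $X$ as a connected sum, and each summand is $\mathbf{CP}^2$ or $\overline{\mathbf{CP}^2}$ according to the orientation convention of the three vanishing cycles of that circle (cf.\ \cite[Figure~6]{hayano}). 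No intersection-form computation is needed.

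Your proposed uniform approach---reduce to four candidates via \cite{MZ}, then compute parity and signature---would work, but the way you propose to extract the intersection form does not. The $V_{ij}$ are not boundaries of individual $2$-handles; each is $M(\Sigma_1,\alpha,\beta)$ for a pair of vanishing cycles, and there is no dictionary sending a single lens space entry of the $6$-tuple to a framing. The framings in the eventual two-component diagram are the \emph{surface framings} of the surviving curves on the embedded torus, determined by their homology classes (for instance $[c_2']=\left(\begin{smallmatrix}-1\\ q\end{smallmatrix}\right)$ is what produces the framing $\epsilon q$), not by back-solving from the $V_{ij}$. To pin these down you would still have to carry out essentially the same Kirby calculus the paper performs---at which point reading off $X$ from the Hopf link is immediate and the intersection-form layer becomes redundant. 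Your observation about Case~(5) being reflection-invariant is correct and is exactly why the ambiguity there is intrinsic.
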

\begin{proof}
The 6-tuples in Cases (1) and (2) appear only in Case (A).
From the proof of Theorem~\ref{thmA}, we know
\[
[a_2] = \begin{pmatrix} 1 \\ 0 \end{pmatrix},\quad
[b_2] = \begin{pmatrix} 0 \\ 1 \end{pmatrix},\quad
[c'_2] = \begin{pmatrix} -1 \\ q \end{pmatrix},\quad
[a'_2] = \begin{pmatrix} p \\ \pm 1 \end{pmatrix}, 
\]where $p, q \in \mathbf{Z}$.

In Case (1), the $6$-tuple appears when either $V_{ba}$ or $V_{ac}$ is $S^1 \times S^2$. Assuming $V_{ba} = S^1 \times S^2$, we have $p = 0$ and $[a'_2] = \begin{pmatrix} 0 \\ \pm 1 \end{pmatrix}.$ An unwrinkle is a homotopy of a smooth map which replaces an innermost curve with outward three cusps in the singular value set into the image of a Lefschetz singularity, and a sink is a homotopy of a smooth map which replaces the image of a Lefschetz singularity and a part of the image of indefinite folds into a cusp, see \cite{lekili}.
We apply an unwrinkle to the inner, cusped circle on the image of $f$, move the image of the Lefschetz singularity obtained by the unwrinkle to the indefinite fold between $a_2$ and $a'_2$ and apply a sink as shown in Figure~\ref{sinkHomotopy}. 
Describe the simple closed curves $a_2, b_2, c'_2, a'_2$ on the standard torus embedded in $S^3$ and assign over/under information to their intersections in the order of the monodromy starting from $b_2$, that is $b_2, c'_2, a'_2, a_2$. See Figure~\ref{KirbyDiagramOf1stCase} for the case $q  > 0$. The framing coefficients of these simple closed curves are induced by the surface framings. This is a Kirby diagram of the $4$-manifold $X$. Canceling the two dotted circles and $a_2$ and $b_2$, we have a Hopf link with framing coefficients $\epsilon q$ and $0$, where $\epsilon$ is the sign in Theorem~\ref{thmA}. Remark that $\epsilon$ is needed since we replaced $q$ by $-q$ in the proof of Theorem~\ref{thmA} with $\epsilon =-1$. Hence $X$ is $S^2 \times S^2$ if $q$ is even and $X$ is $\mathbf{CP}^2 \# \overline{\mathbf{CP}^2}$ if $q$ is odd.
\begin{figure}[htbp]
\begin{center}
\includegraphics[width=9cm, bb=128 553 481 713]{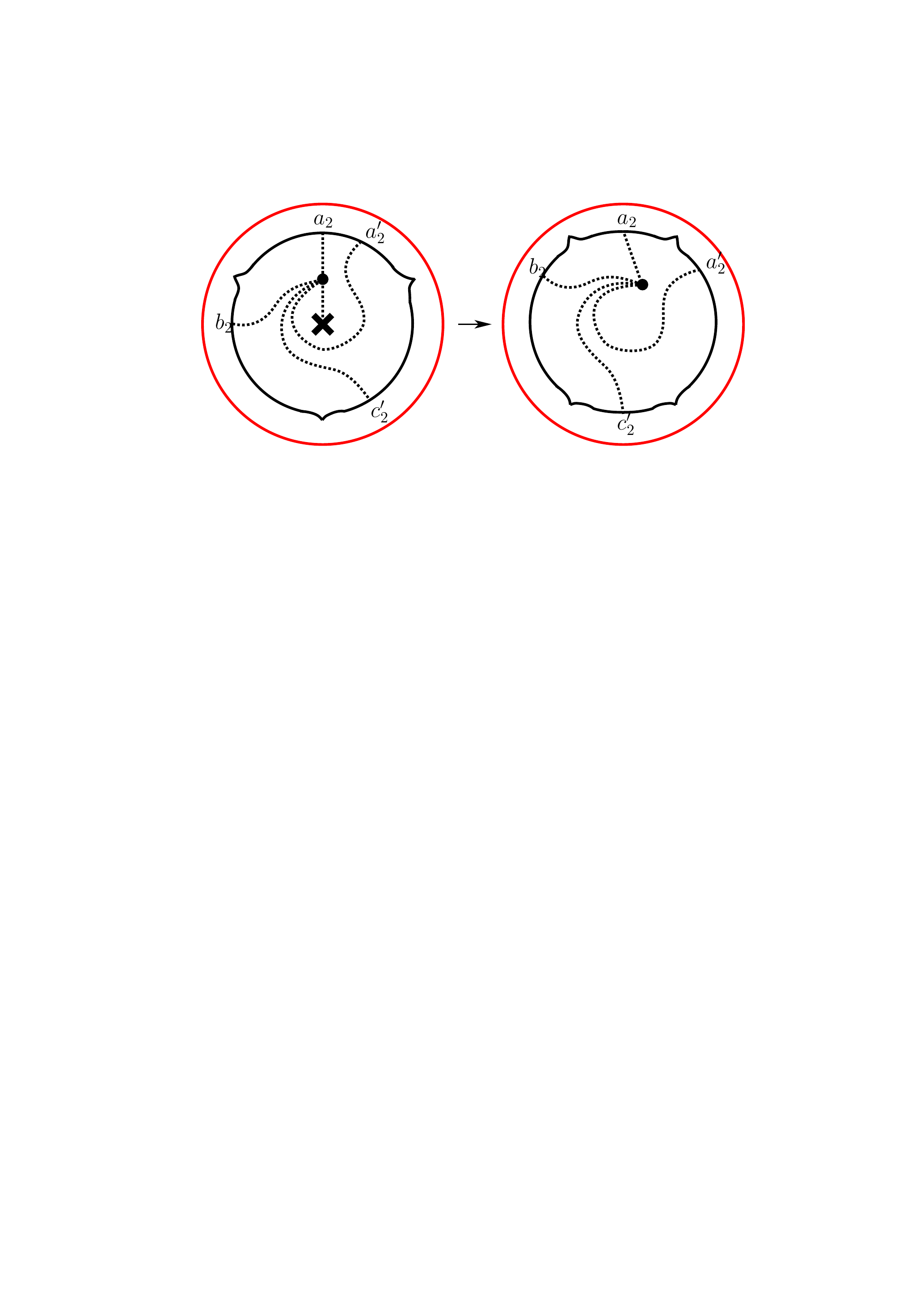}
\caption{A homotopy deformation by a sink.}
\label{sinkHomotopy}
\end{center}
\end{figure}
\begin{figure}[htbp]
\begin{center}
\includegraphics[clip, width=5.5cm, bb=197 493 397 713]{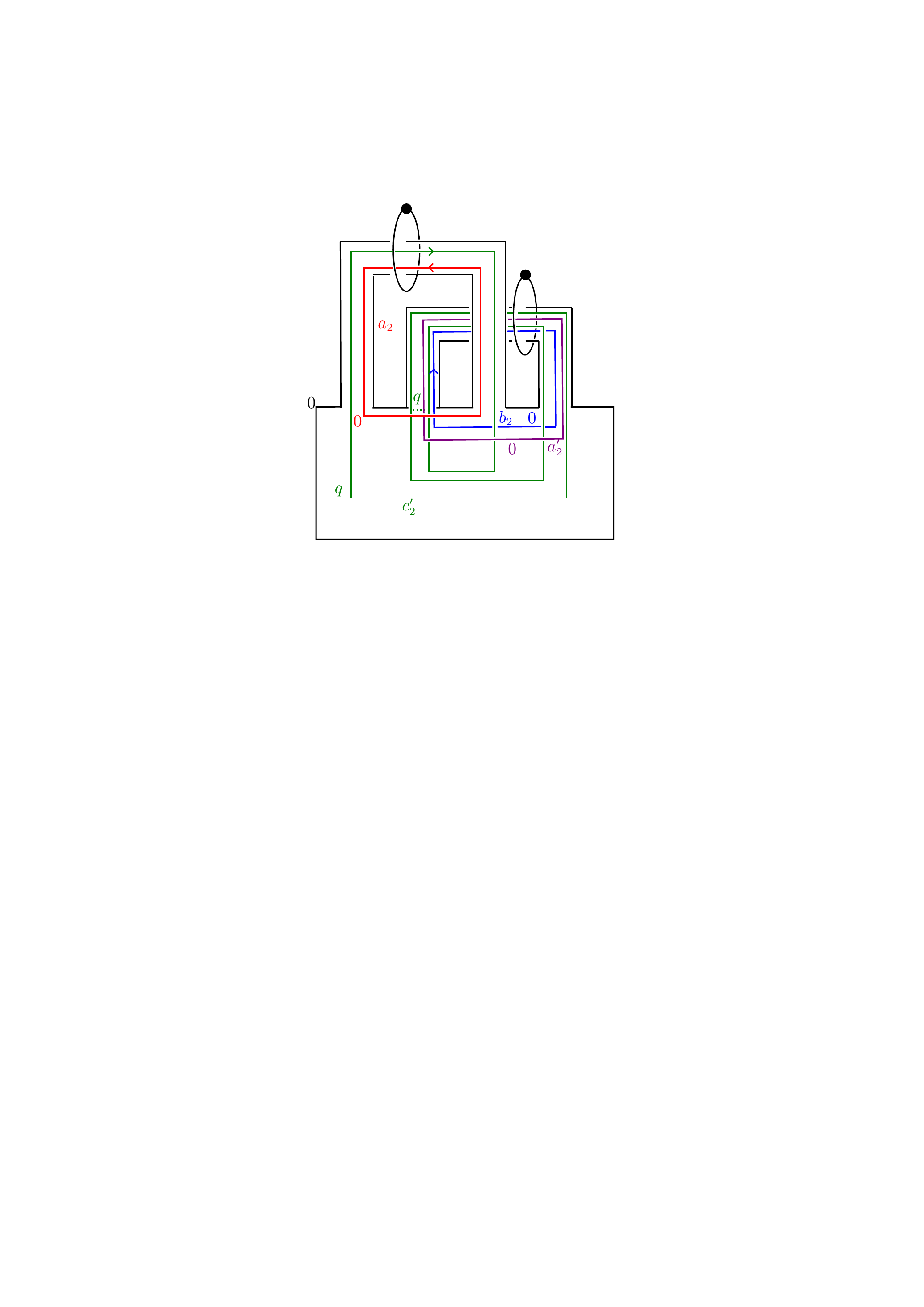}
\caption{The Kirby diagram in Case (1) with $q > 0$.}
\label{KirbyDiagramOf1stCase}
\end{center}
\end{figure}
In Case (2), the $6$-tuple appears when both of $V_{ba}$ and $V_{ac}$ are not $S^1 \times S^2$. Hence it is either (i) $p = -1$ and $[c'_2] = \begin{pmatrix} -1 \\ \pm 2 \end{pmatrix}$ or (ii) $p = -2$ and $[c'_2] = \begin{pmatrix} -1 \\ \pm 1 \end{pmatrix}$. We apply an unwrinkle and a sink as in the previous case. 
In Case (2)-(i), $a'_2 = \begin{pmatrix} -1 \\ \pm1 \end{pmatrix}$ and the Kirby diagram of $\overline{X}$ is as shown on the left in Figure~\ref{KirbyDiagramOf2ndCase} if $\mu = t_d$ and  on the right if $\mu = t^{-1}_d$. Here the $4$-manifold is the mirror image $\overline{X}$ of $X$, not $X$ itself, since we took the reflection at the end of the proof of Theorem \ref{thmA} with $p = -1$. Canceling the two dotted circles and $a_2$ and $b_2$, we have a Hopf link with framing coefficients $\pm1$ and $\pm2$. Therefore, $\overline{X}$ is $\mathbf{CP}^2 \# \mathbf{CP}^2$ if $\mu = t_d$ and it is $\overline{\mathbf{CP}^2} \# \overline{\mathbf{CP}^2}$ if $\mu = t^{-1}_d$. From the proof of Theorem~\ref{thmA}, we see that $\mu =t_d$ if  $\epsilon =1$ and $\mu =t^{-1}_d$ if $\epsilon =-1$. This coincides with the assertion in Case (2).
\begin{figure}[H]
\begin{center}
\includegraphics[clip, width=12cm, bb=130 493 550 713]{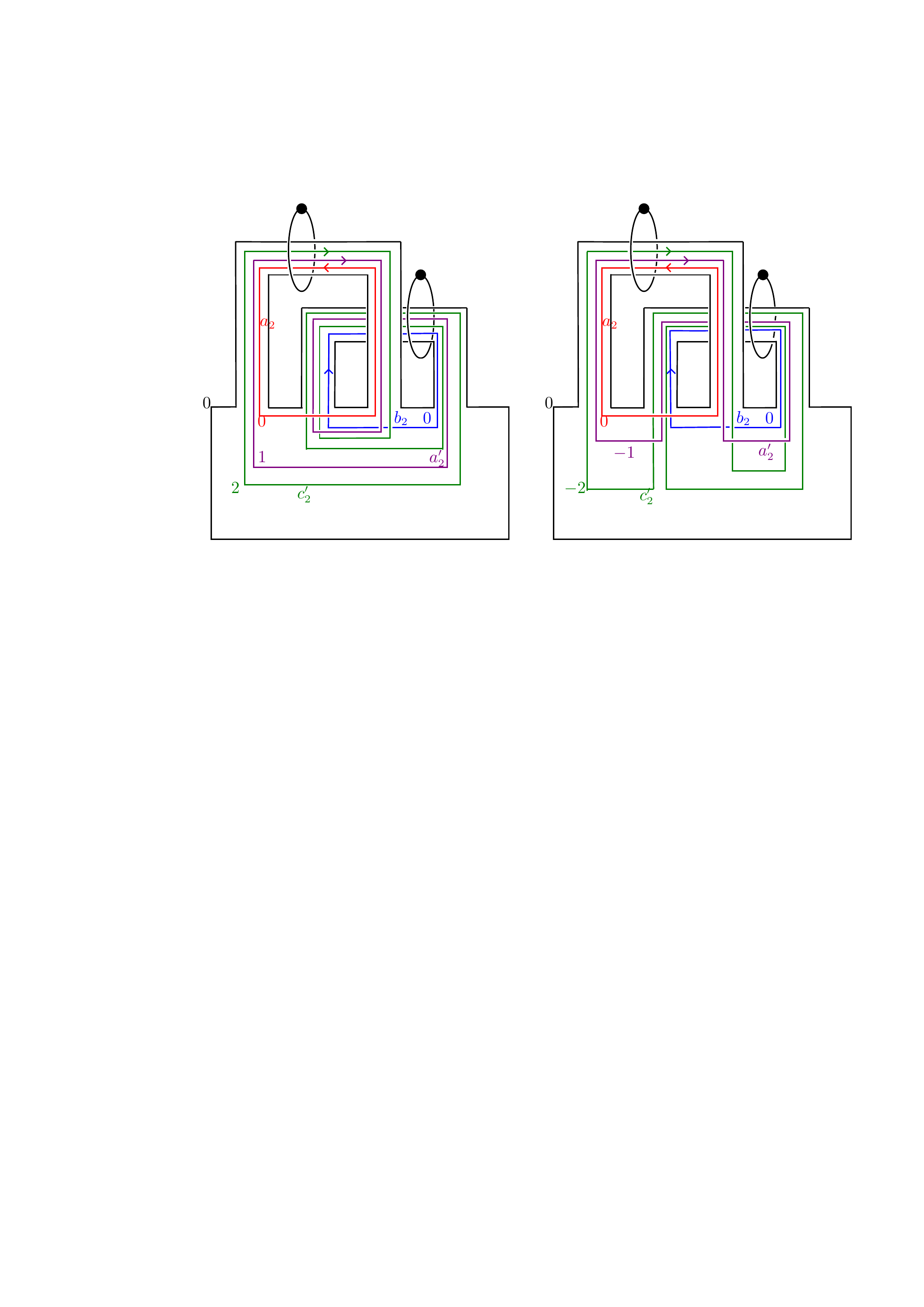}
\caption{The Kirby diagrams of $\overline{X}$ in Case (2)-(i).}
\label{KirbyDiagramOf2ndCase}
\end{center}
\end{figure}
In Case (2)-(ii), $a'_2 = \begin{pmatrix} -2 \\ \pm1 \end{pmatrix}$ and the Kirby diagram of $X$ is as shown on the left in Figure~\ref{KirbyDiagramOf2ndCase4} if $\mu = t_d$ and that on the right if $\mu = t^{-1}_d$. Applying the same argument as in Case (2)-(i) we have the same conclusion.

\begin{figure}[H]
\begin{center}
\includegraphics[clip, width=12cm, bb=130 493 555 713]{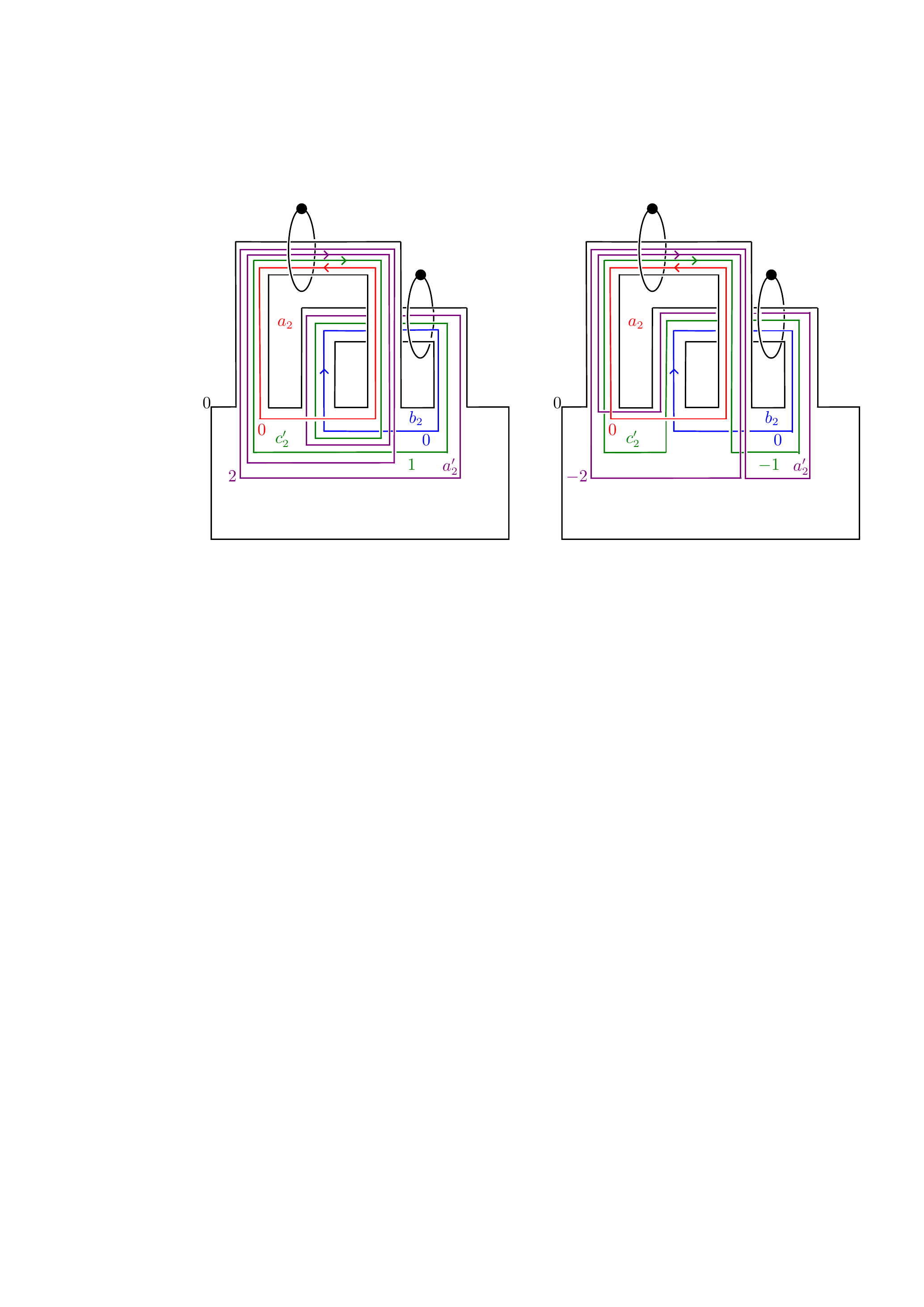}
\caption{The Kirby diagrams of $X$ in Case (2)-(ii).}
\label{KirbyDiagramOf2ndCase4}
\end{center}
\end{figure}
The $6$-tuple in Case (3) only appears in Case (B) with $\mu = t^{\pm4}_d$. We have
\begin{equation*}
[a_2] = \begin{pmatrix} 1 \\ 0 \end{pmatrix},\quad
[b_2] = \begin{pmatrix} 0 \\ 1 \end{pmatrix},\quad
[c'_2] = \begin{pmatrix} -1 \\ \epsilon_2 \end{pmatrix},
\end{equation*} where $\epsilon_2 \in \{ -1, 1 \}$, and $\epsilon_2 = \mp \epsilon$ since $L(4\mp \epsilon_2, 1) = L(4 + \epsilon, 1)$.

Since it is in Case (B), the map can be deformed so that the image of indefinite folds consists of two circles with 3 cusps as shown on the right in Figure \ref{deformation}.
One of the $3$-cusped circles corresponds to the vanishing cycles $a_2, b_2, c'_2$ and it corresponds to a summand $\mathbf{CP}^2$ if $\epsilon_2 = -1$ and $\overline{\mathbf{CP}^2}$ if $\epsilon_2 = 1$. The other $3$-cusped circle corresponds to the inner $3$-cusped circle before the deformation and it corresponds to a summand $\mathbf{CP}^2$ if $\mu = t^{4}_d$ since the mutual positions of the three vanishing cycles of the inner $3$-cusped circle are those of $\mathbf{CP}^2$ and to a summand $\overline{\mathbf{CP}^2}$ if $\mu = t^{-4}_d$ since the mutual positions are those of $\overline{\mathbf{CP}^2}$ as shown in~\cite[Figure 6]{hayano}.
Suppose that $\mu = t^{4}_d$. If $\epsilon_2 = -1$, then $\epsilon =1$ and $X = \mathbf{CP}^2 \# \mathbf{CP}^2$. If $\epsilon_2 = 1$, then $\epsilon = -1$ and $X = \mathbf{CP}^2 \# \overline{\mathbf{CP}^2}$. 
Suppose that $\mu = t^{-4}_d$. Since $V_{bb} = L(\pm4, 1)$ in the proof of Theorem~\ref{thmB}, we need to apply the reflection so that the entry of $V_{bb}$ becomes $L(4, 1)$. Therefore, if $\epsilon_2 = -1$ then $\epsilon = -1$ and $X$ is the mirror image of $\mathbf{CP}^2 \# \overline{\mathbf{CP}^2}$, and  if $\epsilon_2 = 1$ then $\epsilon = 1$ and $X$ is the mirror image of $\overline{\mathbf{CP}^2} \# \overline{\mathbf{CP}^2}$. These coincide with the assertion in Case (3).

In Case (4), the $6$-tuple appears in Case (B) with $\mu = t^{\pm1}_d$ and $\pm \epsilon_2 = 1$. Hence $X$ is $\mathbf{CP}^2 \# \overline{\mathbf{CP}^2}$ by the same observation as in Case (3).


In Case (5), the $6$-tuple appears in Case (B) with $\mu = t^{\pm1}_d$ and $\pm \epsilon_2 =-1$. 
Hence $X$ is $\mathbf{CP}^2 \# \mathbf{CP}^2$ if $\epsilon_2 = -1$, where $\mu =t_d$, and $\overline{\mathbf{CP}^2} \# \overline{\mathbf{CP}^2}$ if $\epsilon_2 = 1$, where $\mu =t^{-1}_d$. This completes the proof.
\end{proof}
Corollary~\ref{determining4MfdC} stated in the introduction follows from Theorem~\ref{determining4Mfd} immediately.

\section{A construction of vertical hyperbolic $3$-manifolds.}
In this section, we give simplified $(2, 0)$-$4$-section maps that have hyperbolic vertical $3$-manifolds.
\begin{dfn} 
A stable map $f : X \to \mathbf{R}^2$ is called a simplified $(g, k)$-$4$-section map if the following conditions hold:
\begin{itemize}
\item The singular value set of definite folds is a circle, bounding a disk $D$.
\item The singular value set of indefinite folds consists of $g$ concentric circles on $D$. Each of the inner $g-k$ circles has $4$ outward cusps.
\item The preimage of the point at the center is a closed orientable surface of genus $g$.
\end{itemize}
\end{dfn}
See Figure~\ref{2_4_gon_multi}.

\begin{thm}
Suppose $X = \#^2 S^2 \times S^2$ or $\#^2\mathbf{CP}^2 \#^2 \overline{\mathbf{CP}^2}$. Then there exists an infinite sequence of simplified $(2, 0)$-$4$-section maps $\{ f_i : X \to \mathbf{R}^2 \}_{i \in \mathbf{N}}$ 
such that their vertical $3$-manifolds 
over the arc $\omega$ in Figure~\ref{2_4_gon_multi} are hyperbolic and mutually non-diffeomorphic.

\end{thm}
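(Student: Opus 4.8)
The plan is to realize $X$ as the result of integral surgery on a carefully chosen link in $S^3$ arising from the handle decomposition associated to a $(2,0)$-$4$-section, and to arrange the preimage of the arc $\omega$ to be surgery on a sublink that is a hyperbolic two-bridge link. First I would set up the $4$-section analog of the machinery of Section 2: a $(2,0)$-$4$-section has inner and outer $4$-cusped circles, each equipped with four vanishing cycles $a_i, b_i, c_i, e_i$ on the genus-$2$ reference surface $\Sigma_2$, and the monodromy around the annulus between the two cusped circles is again a product of Dehn twists constrained by the cusp incidences. As in the proof of Theorem~\ref{determining4Mfd}, I would apply unwrinkles and sinks to convert $f_i$ into a simple wrinkled fibration / Lefschetz fibration picture and read off a Kirby diagram: two $1$-handles (dotted circles) together with several $2$-handles running over the standard genus-$2$ Heegaard surface in $S^3$ with surface framings. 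Cancelling the $1$-handles against two of the $2$-handles leaves an integer-framed link $L_i$ in $S^3$ with $\#^2 S^2 \times S^2$ (resp.\ $\#^2\mathbf{CP}^2\#^2\overline{\mathbf{CP}^2}$) as $S^3_{L_i}$; the freedom in the construction is a parameter (an integer twisting, e.g.\ the power of a Dehn twist along a curve $d$ as in case (3) of Theorem~\ref{determining4Mfd}) that I would tune so that the diffeomorphism type of $X$ is fixed while the link $L_i$ genuinely changes.

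Next I would identify $f_i^{-1}(\omega)$ with Lemma~\ref{moves}-style surgery description: the arc $\omega$ in Figure~\ref{2_4_gon_multi} separates the inner $4$-cusped circle from part of the outer one, so its preimage is obtained by attaching $2$-handles to a thickened surface along a specified subcollection of the vanishing cycles and capping off — concretely it is surgery on a proper sublink $L_i' \subset L_i$. I would choose $\omega$ and the construction so that $L_i'$ is (isotopic to) the two-bridge link $\mathfrak{b}(p_i, q_i)$ with $p_i \to \infty$ along a subsequence avoiding the finitely many non-hyperbolic two-bridge links, so that each $L_i'$ is hyperbolic. The key step is then to invoke Thurston's hyperbolic Dehn surgery theorem: for all but finitely many surgery slopes on a hyperbolic link the result is hyperbolic, and as the slopes (the surface framings, which one can push to $\pm\infty$ by introducing more twist regions / Dehn twists along parallel curves) go to infinity the hyperbolic volumes converge monotonically to $\mathrm{vol}(S^3 \setminus L_i')$ from below while staying distinct; combined with the fact that $\mathrm{vol}(S^3 \setminus \mathfrak{b}(p_i,q_i)) \to \infty$, this produces infinitely many mutually non-diffeomorphic hyperbolic vertical $3$-manifolds. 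Non-diffeomorphism is then certified by the distinct hyperbolic volumes (Mostow rigidity), or alternatively by distinct homology if the $p_i$ are chosen pairwise distinct.

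The main obstacle I anticipate is twofold. First, one must verify that the parameter controlling the construction can be varied \emph{without changing $X$}: this requires checking that the extra Dehn twists introduced into the monodromy are absorbed by handle slides / blow-downs that do not alter the closed $4$-manifold (this is exactly the kind of computation done in Section 5, and it is where the hypothesis $X = \#^2 S^2\times S^2$ or $\#^2\mathbf{CP}^2\#^2\overline{\mathbf{CP}^2}$ enters — these are precisely the manifolds whose relevant Kirby diagrams admit such absorbing moves). Second, one must confirm that the preimage $f_i^{-1}(\omega)$ really is surgery on the hyperbolic two-bridge link and not, say, on a satellite or a link that degenerates; this is a matter of carefully tracking, via the bigon and cusp moves of Lemma~\ref{moves}, which vanishing cycles contribute and in what order, and then recognizing the resulting surgery diagram. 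Once those two points are pinned down, the hyperbolicity and the non-diffeomorphism follow from standard $3$-manifold theory (Thurston's surgery theorem, the classification of exceptional two-bridge links, and Mostow rigidity), so the heart of the argument is the $4$-dimensional bookkeeping rather than any new hyperbolic geometry.
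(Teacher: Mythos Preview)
Your endgame is right—identify $f^{-1}(\omega)$ with integral surgery on a hyperbolic two-bridge link and invoke Thurston's hyperbolic Dehn surgery theorem—and that is exactly what the paper does. But the route you take to get there is substantially different from, and harder than, the paper's.

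The paper does not go through the unwrinkle/sink/monodromy machinery of Section~5 at all. Instead it builds the $4$-section map \emph{directly by doubling}: one writes down explicit curves $\alpha_1,\beta_1,\gamma_1,\alpha_2,\beta_2,\gamma_2$ on the standard genus-$2$ Heegaard surface in $S^3$ (with $\gamma_1\cup\gamma_2$ a two-bridge link and $\gamma_k$ winding $l$ resp.\ $r$ times around a handle), constructs from these a stable map on a $4$-manifold $W$ with boundary whose singular value set is \emph{half} of Figure~\ref{2_4_gon_multi}, and then sets $X=DW=W\cup\overline W$. Because $W$ is a $2$-handlebody on the link $\gamma_1\cup\gamma_2$ with surface framing, $DW$ has a Kirby diagram that collapses to two disjoint Hopf links with one $0$-framed component each; the parity of the other framings is controlled by $l,r$, which is exactly how the two target manifolds $\#^2 S^2\times S^2$ and $\#^2\mathbf{CP}^2\#^2\overline{\mathbf{CP}^2}$ are produced. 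Crucially, $f^{-1}(\omega)=\partial W$ is then \emph{automatically} the surgery on $\gamma_1\cup\gamma_2$ with coefficients governed by $l,r$; no bigon/cusp-move bookkeeping is needed.

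This doubling construction dissolves both of the obstacles you flag. First, you do not need to check that varying a Dehn-twist parameter ``is absorbed'' and leaves $X$ unchanged: the Kirby calculus on the double reduces to two Hopf links regardless of $l,r$, so only the parity of $l,r$ matters for $X$, while $l,r$ themselves move the surgery coefficients on $\partial W$ to infinity. Second, you do not need to track which vanishing cycles contribute to $f^{-1}(\omega)$ via Lemma~\ref{moves}: $\omega$ is the seam of the double, so its preimage is literally $\partial W$. Note also a difference in the variation scheme: the paper fixes the two-bridge link (hyperbolic by Menasco unless a torus link) and sends the \emph{surgery coefficients} to infinity, whereas you propose varying the link type $\mathfrak b(p_i,q_i)$ itself; the former is what makes Thurston's Dehn surgery theorem apply cleanly on a single cusped manifold and gives mutual non-diffeomorphism immediately.

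So your plan is not wrong in spirit, but it replaces a one-line doubling argument with a delicate and as-yet-unverified monodromy computation. The missing idea is precisely this: build the $4$-section as a double so that $f^{-1}(\omega)=\partial W$ and the identification of $X$ are both automatic.
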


\begin{proof}
Let $\Sigma_2$ be a closed oriented surface of genus $2$ and describe simple closed curves $\alpha_1, \beta_1, \gamma_1, \alpha_2, \beta_2, \gamma_2$ on $\Sigma_2$ as in Figure~\ref{hyp_diagram}. The union of simple closed curves $\gamma_1, \gamma_2$ constitutes a $2$-bridge link with slope $\frac{p}{q}$ for $q$ being even if $\Sigma_2$ is assumed to be embedded in $S^3$ in the standard position. The curve $\gamma_1$ winds $l$ times around the left handle of $\Sigma_2$ and the curve $\gamma_2$ winds $r$ times around the right one.
\begin{figure}[H]
\begin{center}
\includegraphics[width=9cm, bb=205 625 393 708]{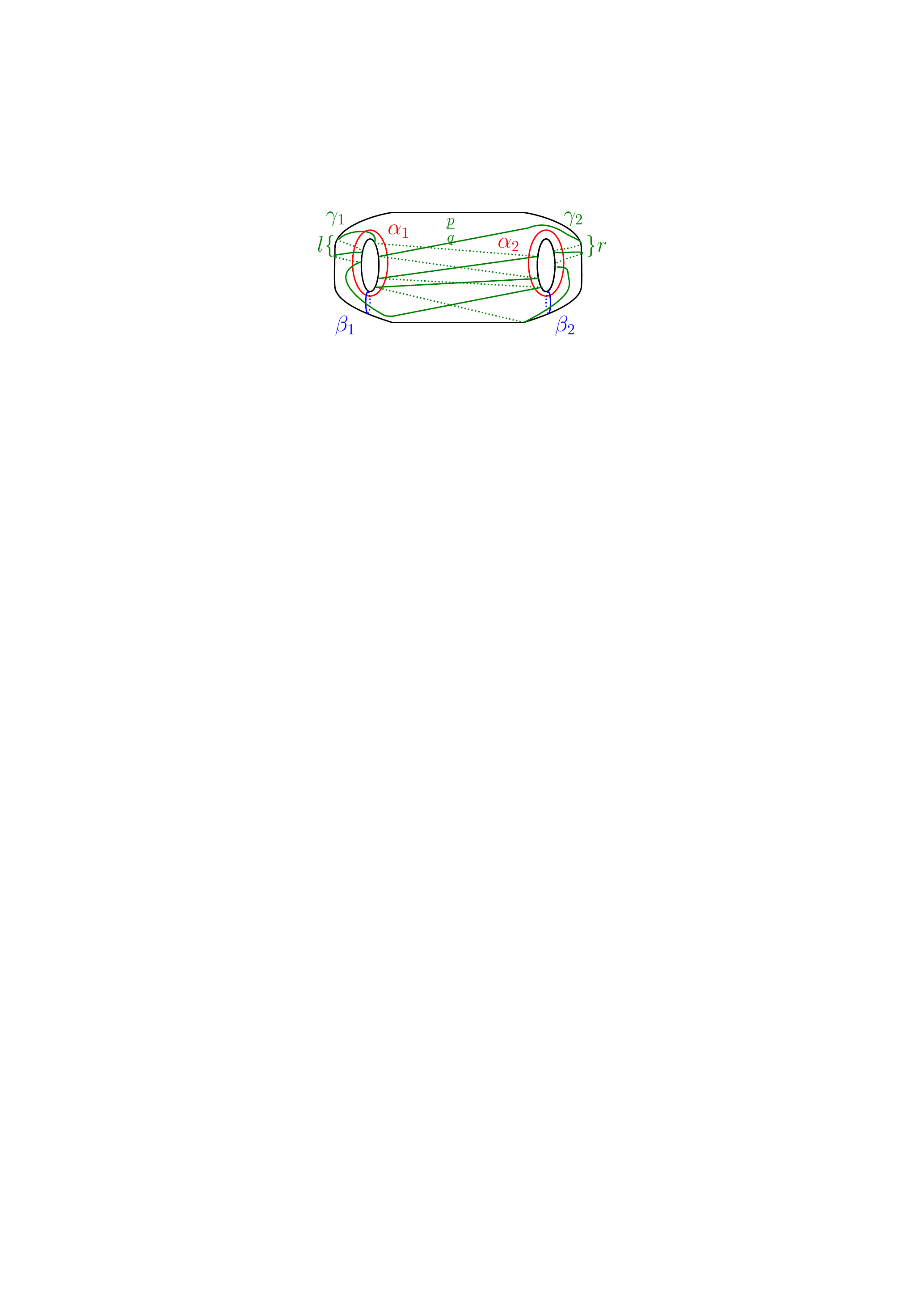}
\caption{The genus-$2$ surface $\Sigma_2$ embedded in $S^3$.}
\label{hyp_diagram}
\end{center}
\end{figure}
We will make a smooth map whose singular value set is the left half of the image of a simplified $(2, 0)$-$4$-section map as in Figure~\ref{2_hone_sing}. Identify the fiber over $p$ with $\Sigma_2$ and attach $2$-handles along  $\alpha_1, \beta_1, \gamma_1$ on $\Sigma_2$, so that we extend the stable map over a small neighborhood of $p$ to a neighborhood of the union of the reference paths for  $\alpha_1, \beta_1, \gamma_1$ from $p$. 
Since  $\alpha_1$ and  $\beta_1$ on $\Sigma_2$ intersect at one point transversely we may extend this map beyond the cusp. Since $\beta_1$ and $\gamma_1$ on $\Sigma_2$ intersect at one point transversely we can also extend the map beyond the other cusp.  We attach $2$-handles along $\alpha_2, \gamma_2$, and $\beta_2$. By Lemma \cite[Lemma 3.2]{hayano}, $\alpha'_2$ and $\alpha_2$ are isotopic and hence $\alpha'_2$ intersects $\beta_2$ once transversely.
By the same reason, $\gamma'_2$ intersects $\beta_2$ once transversely. Hence we may extend the map beyond the two cusps on the outer, cusped half circle. By attaching a $3$-handle corresponding to a neighborhood of the image of definite folds, we get a smooth map  from a $4$-manifold $W$ with boundary whose vanishing cycles are as shown in Figure~\ref{hyp_diagram} and whose singular value set is as in Figure~\ref{2_hone_sing}.

\begin{figure}[htbp]
\begin{center}
\includegraphics[width=4.5cm, bb=212 499 373 713]{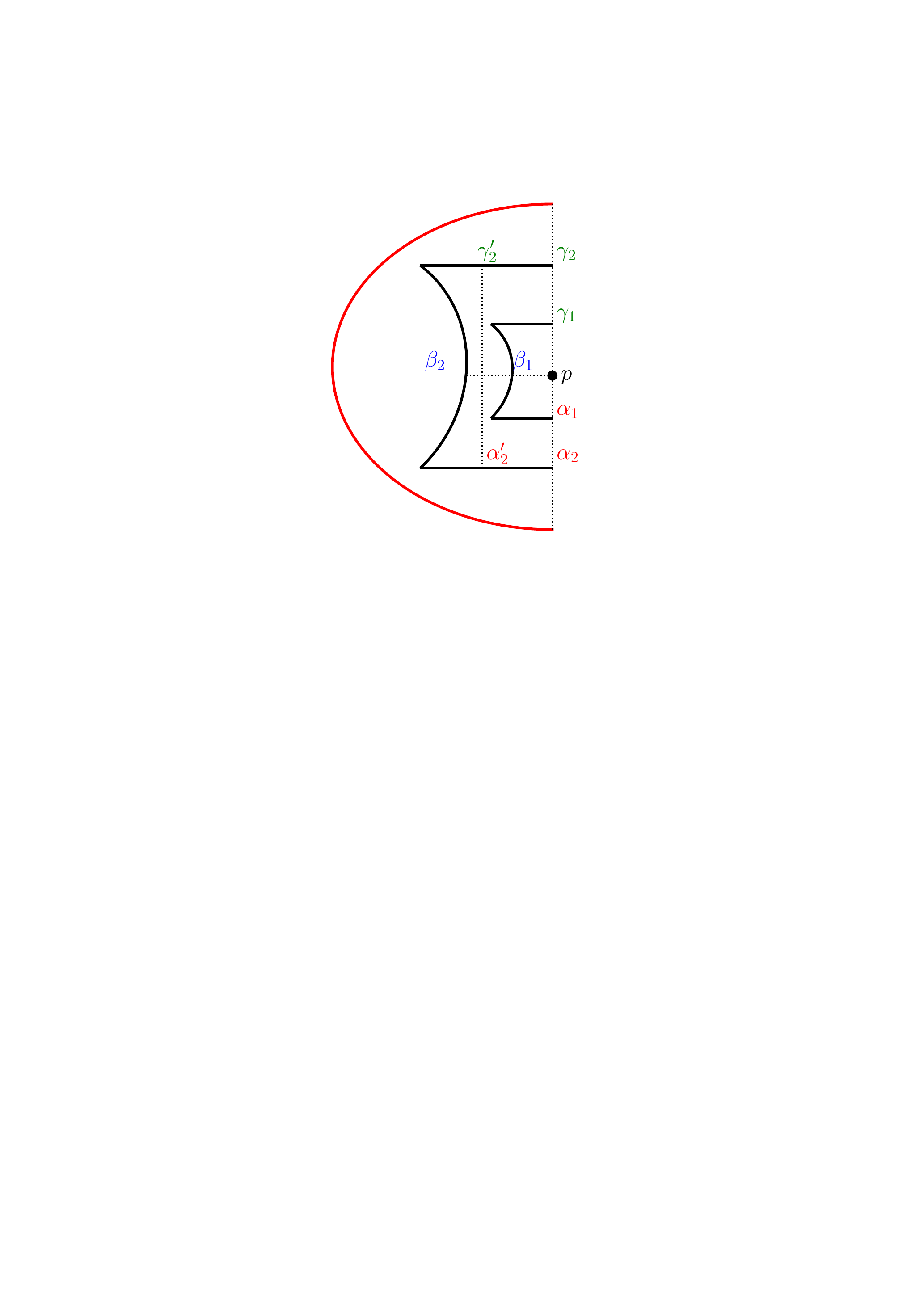}
\caption{A $4$-manifold $W$ with boundary and a stable map $f : W \to \mathbf{R}^2$.}
\label{2_hone_sing}
\end{center}
\end{figure}

The $4$-manifold $W$ has a handle decomposition into one $0$-handle $h^0$, two $2$-handles $h^2 \cup h^2$ attached along the vanishing cycles $\gamma_1, \gamma_2$ and one $4$-handle $h^4$ as shown in Figure~\ref{2_hone_sing_handle_1}. Therefore, the Kirby diagram of $W$ is given by the $2$-bridge link $\gamma_1 \cup \gamma_2$ with surface framing induced by $\Sigma_2$ in the standard position in $S^3$.

\begin{figure}[htbp]
\begin{center}
\includegraphics[width=12cm, bb=128 454 524 713]{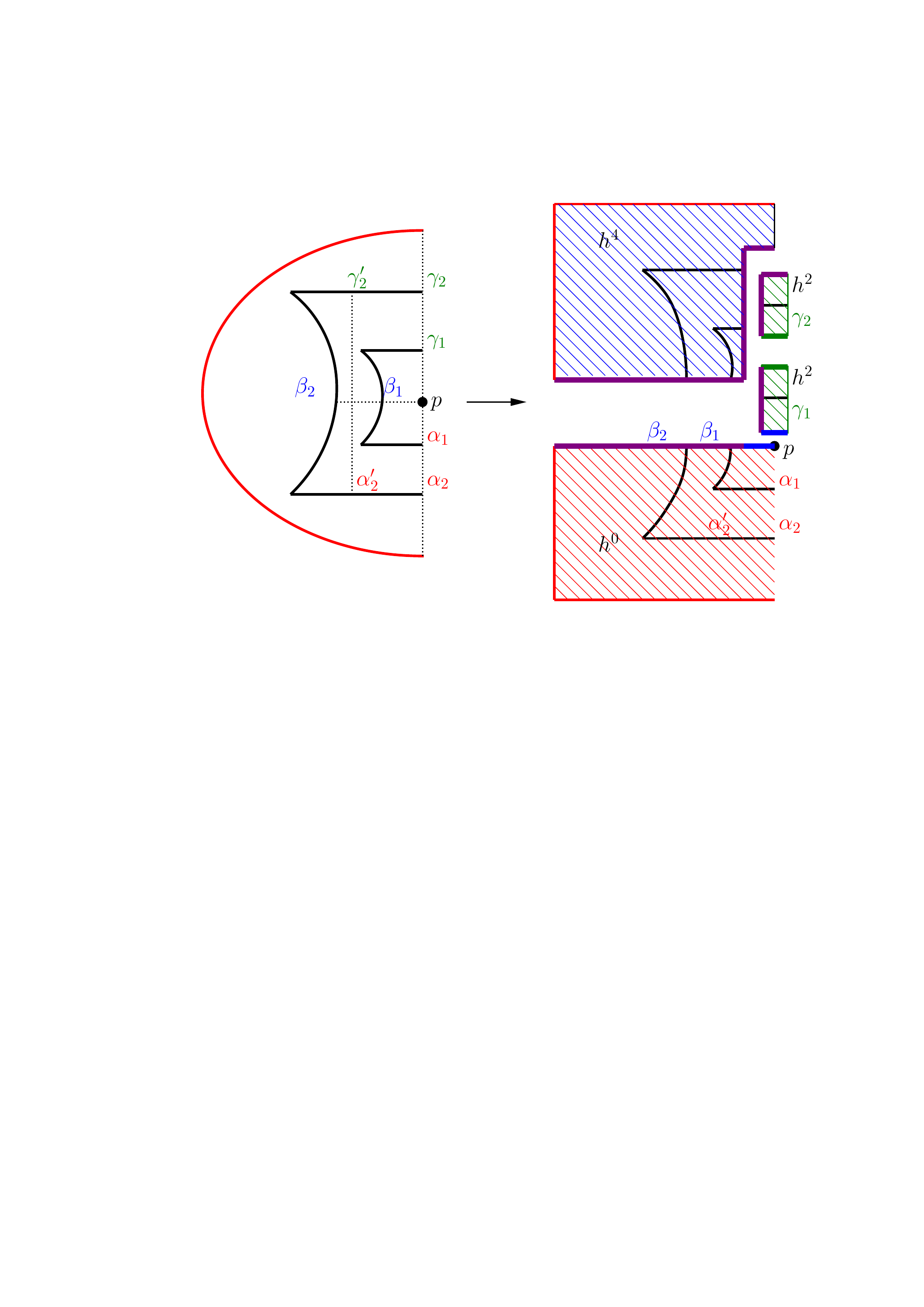}
\caption{The handle decomposition of $W$.}
\label{2_hone_sing_handle_1}
\end{center}
\end{figure}

By gluing this smooth map and its mirror we get a closed $4$-manifold $W \cup \overline{W}$, which is the double $DW$ of $W$, with a stable map from $DW$ to $\mathbf{R}^2$ whose singular value set is as shown in Figure~\ref{2_4_gon_multi}. The Kirby diagram of $DW$ is obtained from the Kirby diagram of $W$, which is the $2$-bridge link $\gamma_1 \cup \gamma_2$ with surface framing, by adding a meridional curve for each of the link components $\gamma_1$ and $\gamma_2$ with framing $0$. By Kirby calculus, it becomes a disjoint union of two Hopf links one of whose link component has framing $0$. Since we can change even/odd of the framing of the other link component by changing $r$ and $l$, we obtain the $4$-manifolds in the assertion.

The $3$-manifold on the boundary of $W$, which is the preimage of $\omega$ in the assertion is obtained from $S^3$ by applying Dehn surgeries along $\gamma_1$ and $\gamma_2$. The $2$-bridge link $\gamma_1 \cup \gamma_2$ is hyperbolic unless it is a torus link by \cite{menasco}. Now we assume $|l|$ and $|r|$ are sufficiently large so that the absolute values of the surgery coefficients become sufficiently large. For such $l$ and $r$ the surgered $3$-manifolds are hyperbolic by \cite{thurston}. Moreover, for sufficiently large $(l, r)$ and $(l', r)$, the corresponding hyperbolic $3$-manifolds are not mutually diffeomorphic by Thurston's hyperbolic Dehn surgery theorem. This completes the proof.
\end{proof}

\begin{rem}
In the above proof, we can show the existence of mutually non-diffeomorphic vertical $3$-manifolds by checking the torsions of their homology groups as follows. Let $(\mu_1, \lambda_1)$ and $(\mu_2, \lambda_2)$ be meridian-longitude pairs of the link components $\gamma_1$ and $\gamma_2$, respectively. Then the first integral homology of the complement of the link $\gamma_1 \cup \gamma_2$ is isomorphic to $\mathbf{Z}\langle\mu_1\rangle\oplus\mathbf{Z}\langle\mu_2\rangle$. For each $k = 1, 2$, the image of the meridian of the solid torus by the Dehn filling for $\gamma_k$ is given by $r_k\mu_k+\lambda_k$ for $r_k \in \mathbf{Z}$. Set $n$ to be the linking number of  $\gamma_1$ and $\gamma_2$. Then $\lambda_1=n \mu_{2}$ and $\lambda_2=n \mu_{1}$. Hence the first integral homology of the surgered manifold $f^{-1}(\omega)$ can be written as
\[
    H_1(f^{-1}(\omega);\mathbf{Z})=\langle \mu_1, \mu_2 \mid
 r_1\mu_1+n\mu_2,\; r_2\mu_2+n\mu_1\rangle.
 \]
 Choosing an infinite sequence of $(r_1, r_2)$ suitably, we can show the existence of mutually non-diffeomorphic vertical $3$-manifolds.
 \end{rem}

\end{document}